\newtheorem{theorem}{Theorem}
\newtheorem{example}[theorem]{Example}
\newtheorem{lemma}[theorem]{Lemma}
\newtheorem{proposition}[theorem]{Proposition}
\newproof{proof}{Proof}
\numberwithin{equation}{subsection}
\numberwithin{theorem}{subsection}
\newcommand{\ignore}[1]{}
\DeclareMathOperator{\asgn}{asgn}
\DeclareMathOperator{\csgn}{csgn}
\begin{document}

\begin{frontmatter}

\title{Oriented hypergraphs and generalizing the Harary-Sachs theorem to integer matrices.}

\author[add2]{Blake Dvarishkis} 
%dvarishkisb@txstate.edu
\author[add3]{Josephine Reynes}
%jreynes@norwich.edu
%josephinesept@gmail.com 

\author[add2]{Lucas J. Rusnak\corref{mycorrespondingauthor}}\ead{Lucas.Rusnak@txstate.edu}

\address[add2]{Department of Mathematics, Texas State University, San Marcos, TX 78666, USA}
\address[add3]{Department of Mathematics, Norwich University, Barre, VT, 05641, USA}

\cortext[mycorrespondingauthor]{Corresponding author}

\begin{abstract}

Incidence-based generalizations of cycle covers, called contributors, extend the Harary-Sachs coefficient theorem for characteristic polynomials of the adjacency matrix of graphs. All minors of the Laplacian resulting from an integer matrix are characterized using their associated oriented hypergraph through a new minimal collection of contributors to produce the coefficients of the total-minor polynomial. We prove that the natural grouping of contributors via tail-equivalence is necessarily cancellative for any contributor family that reuses an edge. We then provide a new combinatorial proof on the non-$0$ isospectrality of the traditional characteristic polynomials of the Laplacian and its dual.
\end{abstract}

\begin{keyword}
Signed graph \sep Laplacian \sep Hypergraph \sep Matrix minor.
\MSC[2020] 05C50 \sep  05C65 \sep 05C22  \sep 05B20 \sep 15A15    
\end{keyword}

\end{frontmatter}

%%%%%%%%

\section{Introduction and Background}

The Harary-Sachs Theorem determines the coefficients for the characteristic polynomial of adjacency matrices of graphs through signed weighted sums of certain subgraphs \cite{SGBook, harary1962determinant, sachs1966teiler}. A uniform hypergraph generalization by Clark and Cooper appears in $\cite{CLARK2022354}$, while a signed graph version for the Laplacian appears in \cite{Sim1} by Belardo and Simi{\'c}. These results have since been shown to have a unifying context using oriented hypergraphs associated to any integer matrix using a finer approach needing only signed configurations. An oriented hypergraphic characterization of both the adjacency and Laplacian matrices appears in \cite{OHSachs}, the connection to reclaiming the matrix-tree theorem for both graphs and signed graphs appears in \cite{OHMTT}, generalizing Chaiken from \cite{Seth1}. Finally, a complete oriented hypergraphic classification of all minors of the adjacency and Laplacian matrices arising from integer matrices appeared in \cite{IH2} with the creation of the total-minor polynomial.

We prove an improved version of the total-minor polynomial by exhibiting a new smallest class of hypergraphic mapping families necessary to determine all Laplacian ordered cofactors. This is accomplished by adapting the techniques from \cite{OHHada}, which produces a new formula for Hadamard's Maximum Determinant Problem for $\{\pm 1\}$-matrices, to the total minor polynomial introduced in \cite{IH2}. We also introduce the concept of dual mapping families which allows for a new combinatorial proof that the spectrum for any Laplacian and its dual agree up to non-zero eigenvalues. Moreover, this indicates that similar results do not hold for off-diagonal minors.

\subsection{Oriented Hypergraphs}

The connection between integer matrices and oriented hypergraphs was introduced in \cite{AH1, OH1} where signed graphic techniques were applied locally between adjacencies to study the matrix structure through hypergraphic families. This was done by adapting the bidirected graph interpretation of orientations of signed graphs \cite{MR0267898,SG,OSG} to multi-directed hypergraphs and building on the the concept of balanced matrices \cite{BM2,DBM,BM} to provide new techniques to bridge graph theory and matrix theory. A generalized algebraic hypergraphic theory has since emerged for signed adjacency and Laplacian matrices, including spectral properties \cite{Reff6,Reff7,Mulas3,Mulas1,Reff2}, characteristic polynomials and minors \cite{OHSachs,IH2}, matrix-tree-type theorems \cite{OHMTT, SGKirch}, and Hadamard matrices \cite{Reff3,Reff5,OHHada}.

An \emph{oriented hypergraph} is a sextuple $G=(V, E, I, \varsigma, \omega, \sigma)$ where $V$, $E$ and $I$ are the sets of vertices, edges, and incidences, respectively; while $\varsigma: I \rightarrow V$, $\omega: I \rightarrow E$, and $\sigma:I\rightarrow \{+1,-1\}$ are the incidence port, attachment, and signing functions, respectively. A value of $\sigma(i) = +1$ is indicated by an arrow at incidence $i$ entering the vertex, while a value of $\sigma(i) = -1$ is indicated by an arrow at incidence $i$ exiting the vertex. The \emph{incidence dual} of an oriented hypergraph $G=(V,E,I,\varsigma,\omega, \sigma)$ is the oriented hypergraph $G^*=(E,V,I,\omega,\varsigma, \sigma)$. Incidence duality is the primary duality for incidence hypergraphs and replaces the line graph. The need for this level of specificity for graph-like objects has been addressed in \cite{IH1, IH2} where the categorical deficiencies in both graph and set-system hypergraphs were shown to not be robust enough to examine integer matrices.

An oriented hypergraph in which each edge has exactly two attachments is called a \emph{bidirected graph}. A bidirected graph can be regarded as an orientation of a signed graph where edge $e$ is signed $-\sigma(i)\sigma(j)$ where incidences $i$ and $j$ are the edge-ends of edge $e$, see \cite{MR0267898, SG, OSG} for the connection between signed graphs and bidirected graphs. Figure \ref{fig:OHs} depicts a bidirected graph $G_1$ and a ``multi-directed'' oriented hypergraph $G_2$.

\begin{figure}[H]
    \centering
    \includegraphics{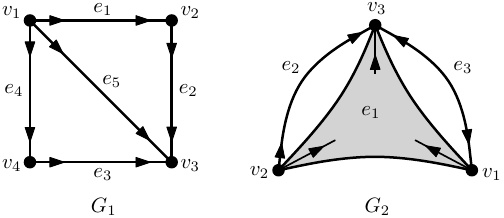}
    \caption{Two oriented hypergraphs.}
    \label{fig:OHs}
\end{figure}

Many oriented hypergraphic definitions coincide with their locally-signed-graphic embedding \cite{AH1, OH1}. This notion of treating hypergraphs locally as signed graphs was formalized in \cite{IH1, IH2} and  \emph{locally-signed-graphic} simply means the oriented hypergraph is studied via the signed adjacencies of a hyperedge. 

A \emph{directed path of length $n/2$} is a sequence of distinct vertices, edges, and their intermediary incidences 
\begin{equation*}
\overrightarrow{P}_{n/2}=(a_{0},i_{1},a_{1},i_{2},a_{2},i_{3},a_{3},...,a_{n-1},i_{n},a_{n})
\end{equation*}
where $\{a_k\}$ is an alternating sequence of vertices and edges, and $i_{k}$ is an incidence between $a_{k-1}$ and $a_{k}$. A \emph{path of length $n/2$} is the set on the directed path of length $n/2$ sequence. A \emph{directed circle} is a closed directed path, while a \emph{circle} will be the set on the directed circle. 

A \emph{directed adjacency of $G$} is an incidence-monic map of $\overrightarrow{P}_{1}$ into $G$; that is, it does not reuse an incidence. Due to the nature of the incidence-maps, it is possible for $\overrightarrow{P}_{1}$ to fold back on itself creating a \emph{backstep} of the form $v,i,e,i,v$ --- the number of backsteps correspond to the degree of $v$. The \emph{adjacency sign of a path} $P$ is 
\begin{equation*}
asgn(P)=(-1)^{\lfloor n/2\rfloor }\prod_{k=1}^{n}\sigma (i_{k})\text{,}
\end{equation*}
which is equivalent to taking the product of the signed adjacencies if $P$ is a vertex-path. 

Returning to Figure \ref{fig:OHs} we see that every adjacency (edge) in $G_1$ is positive as the arrows in a given edge have opposite signs thus $-\sigma(i)\sigma(j)$ is positive --- this is most easily seen as the arrows travel coherently through the edge. Signed graphs with all adjacencies positive are indistinguishable from ordinary graphs. An adjacency is negative if the arrows represent the same sign, so they are visually opposing each other in the oriented hypergraph --- in $G_2$ the $(v_1 , v_3)$-adjacency using edge $e_3$ and the $(v_1 , v_2)$-adjacency using edge $e_1$ are both negative.

The \emph{incidence matrix} of an oriented hypergraph $G$ is the $V \times E$ matrix $\mathbf{H}_{G}$ where the $(v,e)$-entry is the sum of $\sigma(i)$ for each $i \in I$ with $\varsigma(i) = v$ and $\omega(i) = e$. A bidirected graph in which every edge/adjacency is positive is regarded as an orientation of an ordinary graph as they have indistinguishable incidence matrices. The \emph{Laplacian matrix of $G$} is defined as $\mathbf{L}_{G}:=\mathbf{H}_{G} \mathbf{H}_{G}^{T}=\mathbf{D}_{G}-\mathbf{A}_{G}$ for all oriented hypergraphs \cite{AH1}.

\begin{example}
The incidence and Laplacian matrices for $G_1$ and $G_2$ from Figure \ref{fig:OHs} are as follows:

\begin{align*}
    \mathbf{H}_{G_1}=
\left[ 
\begin{array}{ccccc}
-1 & 0 & 0 & -1 & -1\\ 
1 & -1 & 0 & 0 & 0\\ 
0 & 1 & 1 & 0 & 1\\ 
0 & 0 & -1 & 1 & 0 \\
\end{array}
\right]
& &
\mathbf{H}_{G_2}=
\left[ 
\begin{array}{ccccc}
-1 & 0 & 1\\ 
-1 & -1 & 0\\ 
1 & 1 & 1\\ 
\end{array}
\right] \\
\mathbf{L}_{G_1}=
\left[ 
\begin{array}{ccccc}
3 & -1 & -1 & -1\\ 
-1 & 2 & -1 & 0\\ 
-1 & -1 & 3 & -1\\ 
-1 & 0 & -1 & 2
\end{array}
\right]
& &
\mathbf{L}_{G_2}=
\left[ 
\begin{array}{ccccc}
2 & 1 & 0\\ 
1 & 2 & -2\\ 
0 & -2 & 3 
\end{array}
\right]
\end{align*}

Note that any integer matrix can be regarded as the incidence matrix of an oriented hypergraph with the inclusion of multiple arrows at vertices.
\end{example}

It is a simple and well-known fact that the traditional characteristic polynomials of the Laplacian and its dual are identical up to $0$ eigenvalues. For example, the traditional characteristic polynomials of $G_1$ from Figure \ref{fig:OHs} and its dual are

\begin{align}
   \chi(\mathbf{L}_{G},x) = x^4 - 10x^3 +32x^2 -32x \label{eq:1}
\end{align}

and

\begin{align}
   \chi(\mathbf{L}_{G^*},x) = x^5 - 10x^4 +32x^3 -32x^2. \label{eq:2}
\end{align}
The coefficients also track diagonal minors with size determined by the degree of the monomial. 

In \cite{IH2} the total minor polynomial was introduced to study all ordered minors. Let $\mathbf{X}$ be the $V \times V$ matrix whose $ij$-entry is $x_{ij}$, the \emph{total-minor polynomial} of $\mathbf{L}$ is the multivariable characteristic polynomial $\chi (\mathbf{L},\mathbf{x}):=\det (\mathbf{X-L)}$. We improve the summation when determining the total minor polynomial to an optimal collection of hypergraph maps that are necessarily edge monic. We then use this characterization to revisit the non-$0$ isospectrality of the traditional characteristic polynomials of the Laplacian and its dual. We demonstrate that this phenomenon is actually a duality relationship that resorts positionally-labeled monomials among different minor families in the total minor polynomial. Moreover, this duality is not expressed across all minors.

\subsection{Unifying Matrix-tree and Harary-Sachs Theorems}

The Harary-Sachs Theorem uses vertex covers to build permutation analogs to study the characteristic polynomial of the adjacency matrix of a graph \cite{SGBook}, and the signed graphic version appears in \cite{Sim1}. The oriented hypergraphic version from \cite{OHSachs} extends this to both the adjacency and Laplacian matrices of oriented hypergraphs, while the all-minors version appears in \cite{IH2}. These permutation-analogs, called \emph{contributors}, are used in these generalizations and allow for path images to fold back on themselves reusing the same incidence. 

Formally, a \emph{contributor of $G$} is an incidence preserving map from a disjoint union of $\overrightarrow{P}_{1}$'s with tail $t$ and head $h$ into $G$ defined by $c:\dcoprod \limits_{v\in V}\overrightarrow{P}_{1}\rightarrow G$ such that $c(t_{v})=v$ and $\{c(h_{v})\mid v\in V\}=V$. As the set of heads and the set of tails of a contributor both cover $V$, they are naturally associated with a permutation. However, many contributors may be associated to the same permutation. 

An early version of contributor sorting appeared in \cite{OHMTT} and formalized in \cite{IH2} where contributors are sorted by their first steps (their tail images in $G$) into \emph{tail-equivalence classes}. It was shown in \cite{OHMTT} that when $G$ is $2$-uniform (is a signed graph) each tail-equivalence class is a Boolean lattice ordered by circle containment. This leads to a refinement of Chaiken's All-minors Matrix-tree Theorem for signed graphs in \cite{Seth1}.

\textbf{Notation:} The set of contributors of an oriented hypergraph is denoted ${\mathfrak{C}}(G)$. Throughout, let $U,W \subseteq V$ with $\lvert U \rvert = \lvert W \rvert$, while a total ordering of each set will be denoted by $\mathbf{u}$ and $\mathbf{w}$, respectively. Let ${\mathfrak{C}}(G;\mathbf{u},\mathbf{w})$ be the set of \emph{restricted} contributors in $G$ where $c(u_i)=w_i$, and two elements of ${\mathfrak{C}}(G;\mathbf{u},\mathbf{w})$ are said to be \emph{$[\mathbf{u},\mathbf{w}]$-equivalent}. Let $\mathfrak{\hat{C}}(G;\mathbf{u},\mathbf{w})$ be the set obtained by removing the pairwise $\mathbf{u} \rightarrow \mathbf{w}$ mappings from ${\mathfrak{C}}(G;\mathbf{u},\mathbf{w})$; the elements of $\mathfrak{\hat{C}}(G;\mathbf{u},\mathbf{w})$ are called the \emph{reduced} $[\mathbf{u},\mathbf{w}]$-equivalent contributors. To avoid confusion between an algebraic cycle and a graph component that forms a closed walk, we reserve the term ``cycle'' for algebraic cycles of permutations.

\begin{example}
Figure \ref{fig:Conts} depicts two contributors, one in $G_1$ and one in $G_2$. Note that the incidence arrows are omitted, and the direction of the each $\overrightarrow{P}_{1}$ map is shown.

The $G_1$ contributor is naturally associated to the permutation $(v_1 v_2 v_3)(v_4)$, where the backstep at $v_4$ represents the fixed point. Note that a backstep reuses the same incidence and are distinct from loops which use different incidences but also represent fixed points. The ability to distinguish between loops and backsteps is critical to the generalization from adjacency matrices to Laplacian matrices, as discussed in \cite{AH1}.

\begin{figure}[H]
    \centering
    \includegraphics{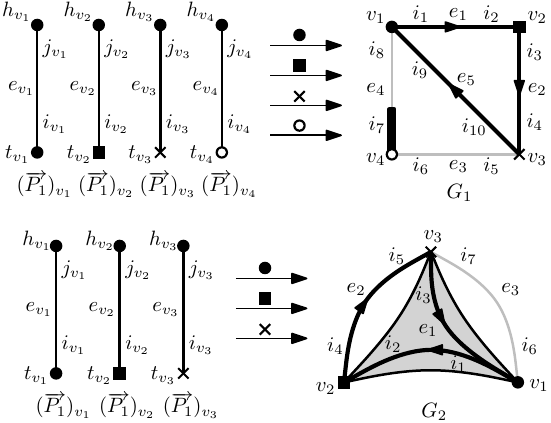}
    \caption{Two contributor examples.}
    \label{fig:Conts}
\end{figure}

The $G_1$ contributor is also a member of ${\mathfrak{C}}(G_1;v_1,v_2)$ as it has a $v_1 \rightarrow v_2$ path image (adjacency). It is also a member of ${\mathfrak{C}}(G_1;(v_1,v_4),(v_2,v_4))$ as it has both a $v_1 \rightarrow v_2$ path image and a $v_4 \rightarrow v_4$. Removing these would produce reduced contributors.

The $G_2$ contributor is naturally associated to the permutation $(v_1 v_2 v_3)$. This contributor is an example of a special type of contributor we will analyze in Section \ref{sec:EM}, called \emph{non-edge-monic}, that is, an edge of $G_2$ appears in more than one $\overrightarrow{P}_{1}$ image.
\end{example}

The hypergraph $G^0$ is the \emph{zero-loading of $G$} and extends the hypergraph to a uniform incidence hypergraph while assigning a weight of $0$ to all new incidences. The value of $\asgn(c)$ is adjusted to include a value of $0$ for $\sigma$. Thus, a reduced contributor exists in $G$ if and only if it is non-zero. As discussed in \cite{IH2,SGKirch}, the $\mathbf{u} \rightarrow \mathbf{w}$ maps need not exist in $G$ if they are removed, but the maps must be allowed to exist a priori their removal in order to define everything --- this is remedied by the zero-loading $G^0$ to allow all possible maps to exist, and the non-zero ones are those that actually ``contribute'' to the minors. To simplify notation, let $\mathfrak{\hat{C}}_{\neq0}(G^0;\mathbf{u},\mathbf{w})$ be the set of non-zero reduced $[\mathbf{u},\mathbf{w}]$-equivalent contributors in $G^0$; that is, the reduced $[\mathbf{u},\mathbf{w}]$-equivalent contributors that reside in $G$.

Let $ec(c)$, $oc(c)$, $pc(c)$ and $nc(c)$ be the number of even, odd, positive, and negative components in a (reduced-)contributor $c$, respectively, as determined by number of adjacencies or $\asgn$. Also, let $bs(c)$ denote the number of backsteps in contributor $c$. It is worth noting that backsteps are technically negative weak walks that do not arise from adjacencies. Backsteps are treated separately to help the reader as they are essentially ``hole-free loops'' but they are never ``subobjects,'' thus necessitating the need for the zero-loading and the functional definition of contributor; specific details can be found in \cite{IH2} which rectifies the lack of a categorical subobject classifier for graphs by transitioning to oriented hypergraphs. Finally, $ec(\check{c})$ is the number of even cycles in the un-reduced contributor before it was reduced to $c$. Define the \emph{contributor sign} as
\begin{align*}
    \csgn(c)=(-1)^{ec(\check{c})+nc(c)+bs(c)}
\end{align*}
and we have the following restatement of the Total-minor Polynomial Theorem from \cite{IH2}.
\begin{theorem}[Total-minor Polynomial \cite{IH2}]
\label{t:Main}
Let $G$ be an oriented hypergraph with Laplacian matrix $\mathbf{L}_{G}$. Then, 

\begin{align*}
    \chi(\mathbf{L}_{G},\mathbf{X}) = \dsum\limits_{ [\mathbf{u},\mathbf{w}]}\left( \dsum\limits_{\substack{c \in \mathfrak{\hat{C}}_{\neq0}(G^0;\mathbf{u},\mathbf{w}) \\ }} \csgn(c) \right) \dprod\limits_{i}x_{u_{i},w_{i}}.
\end{align*}
\end{theorem}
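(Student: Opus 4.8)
The plan is to expand $\det(\mathbf{X}-\mathbf{L}_{G})$ by the Leibniz formula and match each resulting term with a reduced contributor. Writing $\mathbf{L}=\mathbf{L}_{G}$ and summing over permutations $\pi$ of $V$,
\[
\det(\mathbf{X}-\mathbf{L}) = \sum_{\pi}\sgn(\pi)\prod_{v\in V}\left(x_{v,\pi(v)}-\mathbf{L}_{v,\pi(v)}\right).
\]
First I would expand the product over $v$, recording for each vertex whether the factor $x_{v,\pi(v)}$ or $-\mathbf{L}_{v,\pi(v)}$ is selected. This selection is encoded by a subset $U\subseteq V$ (the $x$-rows), and with $\mathbf{u}$ the induced ordering of $U$ and $w_{i}:=\pi(u_{i})$ we obtain the monomial $\prod_{i}x_{u_{i},w_{i}}$ together with the factor $(-1)^{|V\setminus U|}\prod_{v\notin U}\mathbf{L}_{v,\pi(v)}$. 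Grouping terms by the pair $[\mathbf{u},\mathbf{w}]$ then isolates the coefficient of each monomial, matching the outer sum of the statement.

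Next I would interpret each Laplacian factor combinatorially. Since $\mathbf{L}=\mathbf{H}_{G}\mathbf{H}_{G}^{T}$, the entry $\mathbf{L}_{v,w}$ is a sum of $\sigma(i)\sigma(j)$ over incidence pairs $i,j$ sharing an edge with ports $v$ and $w$; each such pair is exactly a $\overrightarrow{P}_{1}$ image from $v$ to $w$, and since $\asgn$ of a length-$1$ path is $-\sigma(i)\sigma(j)$, we have $-\mathbf{L}_{v,w}=\sum_{P:v\to w}\asgn(P)$. Expanding every factor $-\mathbf{L}_{v,\pi(v)}$ for $v\notin U$ thus selects one path image per non-$U$ vertex; the disjoint union of these images, together with the removed $\mathbf{u}\to\mathbf{w}$ maps reinstated formally in the zero-loading $G^{0}$, is precisely a reduced contributor $c\in\mathfrak{\hat{C}}_{\neq0}(G^{0};\mathbf{u},\mathbf{w})$. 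Here the zero-loading is what lets the $\mathbf{u}\to\mathbf{w}$ maps ``exist'' formally while contributing only the variable $x_{u_{i},w_{i}}$, and it is also the device that keeps loops (distinct incidences) separate from backsteps (a reused incidence) on the diagonal entries.

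The crux is the sign reconciliation, showing the accumulated sign of each term equals $\csgn(c)=(-1)^{ec(\check{c})+nc(c)+bs(c)}$. I would argue in three pieces: (i) the full permutation $\pi$ is the permutation of the unreduced contributor $\check{c}$, so $\sgn(\pi)=(-1)^{ec(\check{c})}$, since a cycle of length $\ell$ carries sign $(-1)^{\ell-1}$ and only even cycles contribute $-1$; (ii) rewriting each $\sigma(i_{v})\sigma(j_{v})=-\asgn(P_{v})$ shows the leftover $(-1)^{|V\setminus U|}$ together with the incidence signs collapses to $\prod_{v\notin U}\asgn(P_{v})$; and (iii) the product of step adjacency signs around a circle telescopes to the adjacency sign of that circle, so the non-backstep components contribute $(-1)^{nc(c)}$ while each backstep contributes a factor $-1$, giving $\prod_{v\notin U}\asgn(P_{v})=(-1)^{nc(c)+bs(c)}$. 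Multiplying (i) and (iii) yields $\csgn(c)$ exactly.

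I expect the main obstacle to be the careful handling of (ii) and (iii) in the presence of the zero-loading and the reduction operation. One must verify that deleting the $\mathbf{u}\to\mathbf{w}$ maps leaves the even-cycle count $ec(\check{c})$ untouched (it is read off the unreduced permutation $\pi$) while the adjacency-sign bookkeeping $nc(c)$ and $bs(c)$ is read off the reduced components, and that backsteps on the diagonal are never conflated with genuine loops. Finally, confirming that summing over all incidence choices and all subsets $U$ produces every element of $\mathfrak{\hat{C}}_{\neq0}(G^{0};\mathbf{u},\mathbf{w})$ exactly once --- i.e.\ that the correspondence between Leibniz terms and reduced contributors is a bijection --- completes the argument.
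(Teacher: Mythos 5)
The paper never proves Theorem \ref{t:Main}: it is explicitly a restatement of the total-minor polynomial theorem of \cite{IH2}, quoted here so that Theorem \ref{t:Main2} can sharpen it, so there is no in-paper proof to compare against. Judged on its own terms, your proposal is correct, and it follows the same route as the cited lineage (\cite{OHSachs,IH2}): Leibniz expansion of $\det(\mathbf{X}-\mathbf{L})$, splitting each permutation term by the set $U$ of positions where the $x$-entry is selected, the identity $-\mathbf{L}_{v,w}=\sum_{P\colon v\to w}\asgn(P)$ coming from $\mathbf{L}=\mathbf{H}_{G}\mathbf{H}_{G}^{T}$ (pairs $i=j$ giving backsteps with $\asgn=-1$, pairs $i\neq j$ giving adjacencies --- exactly the loop/backstep distinction the paper stresses), and the sign ledger $\sgn(\pi)=(-1)^{ec(\check{c})}$ together with $\prod_{v\notin U}\asgn(P_v)=(-1)^{nc(c)+bs(c)}$, which multiplies out to $\csgn(c)$.

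Two points you should tighten. First, in step (iii) you telescope step-signs only ``around a circle,'' but after removing the $\mathbf{u}\to\mathbf{w}$ maps a component of $c$ is in general an \emph{open path} (the all-circle restriction only enters later, in the paper's duality section via Lemma \ref{l:InCircle}); fortunately the same computation covers this: a concatenation of $k$ steps has $\asgn=(-1)^{k}\prod\sigma$ over its $2k$ incidences, which equals the product of the $k$ step-signs whether or not the walk closes, so $nc(c)$ is correctly read off path components as well. Second, your closing bijection claim should make explicit that $\csgn$ is well-defined on reduced contributors: $\pi$ is recoverable from $c$ plus the class data ($\pi(u_i)=w_i$, and $\pi(v)$ is the head of $P_v$ otherwise), so $ec(\check{c})$ does not depend on which $u_i\to w_i$ image in $G^{0}$ was removed; this is also why the possibly many zero-loaded $u_i\to w_i$ images all collapse to one element of $\mathfrak{\hat{C}}_{\neq 0}(G^{0};\mathbf{u},\mathbf{w})$, matching the fact that a Leibniz term carries no incidence choice at the $x$-positions. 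With those two clarifications your argument is complete.
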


\begin{example}
    The coefficient of $x_{v_2 v_2}$ is the ordered $(v_2,v_2)$-cofactor of $\mathbf{L_{G_1}}$. This is calculated by considering all contributors of $G_1$ restricted to $v_2 \rightarrow v_2$, then remove the $v_2 \rightarrow v_2$ map and sign each contributor via $\csgn$. The $8$ non-cancellative contributors are depicted in Figure \ref{fig:v2v2l}. Observe that the natural extension of these backsteps across their indicated edge produce the $8$ spanning trees from the Matrix-tree theorem, and they are all directed toward sink $v_2$.
\begin{figure}[H]
    \centering
    \includegraphics{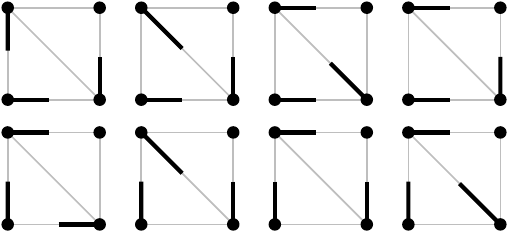}
    \caption{All $8$ non-cancellative reduced contributors of $G_{1}$ where $v_2 \rightarrow v_2$ is removed}
    \label{fig:v2v2l}
\end{figure} 
Note, there are more contributors where $v_2 \rightarrow v_2$. However, these all naturally cancel, as discussed in \cite{OHMTT}, which we expand upon in Section \ref{sec:EM} to prevent large brute-force calculations of contributor signs.
\end{example}

\section{Contributor Equivalence}
\label{sec:EM}

\subsection{Tail-equivalence}
\label{ssec:TEC}

Brute-force calculations of contributor signs can be mitigated by the use of a contributor-partitioning process via ``tail-equivalence'' classes. Two contributors are said to be \emph{tail-equivalent} if the image of their tail-incidences are the same, and \emph{head-equivalent} if the image of their head-incidences are the same. These create a new partition of contributors in \emph{tail-equivalence classes} and \emph{head-equivalence classes}. Each tail-equivalence class (and head-equivalence classes, by duality) is uniquely represented by its identity-permutation representative --- that is, a contributor consisting of only backsteps. Tail-equivalence was first observed in \cite{OHMTT} under the name ``activation class'' for signed/bidirected graphs before it was extended to contributors of oriented hypergraphs in \cite{IH2}, as ``activation'' was not well-defined on hyperedges. There is a trivial relationship between tail-equivalence classes and head-equivalence classes:

\begin{lemma}
    For any oriented hypergraph $G$ the set of tail-equivalence classes can be re-sorted into (different) head-equivalence classes by reversing all cycles, which naturally correspond to their algebraic inverse permutation (there are still multiple copies of the permutations for each respective contributor). 
\end{lemma}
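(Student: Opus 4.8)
The plan is to define an explicit \emph{reversal} involution on contributors and to verify that it exchanges the tail- and head-incidence data while inverting the associated permutation. Let $\pi$ be the permutation of a contributor $c$, so that the path indexed by $v$ runs from $v$ to $\pi(v)$. First I would describe the operation on a single directed path: given $\overrightarrow{P}_{1}=(v,i_{1},e,i_{2},\pi(v))$ appearing in $c$, replace it by its reverse $(\pi(v),i_{2},e,i_{1},v)$. Because the signing function $\sigma$ is defined on incidences and is independent of the direction in which a path traverses them, the reversed sequence is again an incidence-preserving image of $\overrightarrow{P}_{1}$; in particular backsteps $v,i,e,i,v$ (and loops) are fixed by the operation. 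Performing this on every path of $c$ and then relabeling the reversed path formerly indexed by $v$ so that it is indexed by $\pi(v)$ yields a new map $c^{\mathrm{rev}}$.

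Next I would check that $c^{\mathrm{rev}}$ is a genuine contributor whose permutation is $\pi^{-1}$. After reversal and relabeling, the path indexed by $\pi(v)$ runs from $\pi(v)$ to $v$, so the tail-port condition $c^{\mathrm{rev}}(t_{w})=w$ holds for every $w$, and the heads $\{c^{\mathrm{rev}}(h_{w})\mid w\in V\}=\{v\mid v\in V\}=V$ still cover $V$ since $\pi$ is a bijection. The associated permutation sends $\pi(v)\mapsto v$, hence equals $\pi^{-1}$, which is precisely the effect of reversing each algebraic cycle $(v_{1}\,v_{2}\,\cdots\,v_{k})$ into $(v_{1}\,v_{k}\,\cdots\,v_{2})$. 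Applying reversal a second time restores each path and its index, so the operation is an involution and therefore a bijection on $\mathfrak{C}(G)$.

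The crucial observation is that reversal carries the tail-incidence data of $c$ onto the head-incidence data of $c^{\mathrm{rev}}$: the tail incidence $i_{1}$ of the $v$-path becomes the head incidence of the reversed $\pi(v)$-path, and symmetrically. Consequently two contributors have identical tail-incidence images if and only if their reversals have identical head-incidence images, which says exactly that reversal maps tail-equivalence classes bijectively onto head-equivalence classes and re-sorts them as claimed. Since the underlying map on contributors is a bijection, each class is matched with a class of the same size, so the multiplicities of every permutation are preserved---this accounts for the parenthetical remark that multiple copies of each permutation persist.

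The argument is bookkeeping, so the step I would treat most carefully is the relabeling convention: one must reindex each reversed path by its new tail so that the tail-port condition is maintained and so that the cycle $(v_{1}\,v_{2}\,\cdots\,v_{k})$ genuinely produces the cycle $(v_{1}\,v_{k}\,\cdots\,v_{2})$ of $\pi^{-1}$ rather than a mismatched head-assignment. Once the indices are tracked correctly, every remaining claim is immediate, consistent with the statement being described as a trivial relationship.
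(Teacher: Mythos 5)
Your proposal is correct and matches the paper's intent exactly: the paper states this lemma without proof, treating the cycle-reversal correspondence as trivial, and your reversal involution $c \mapsto c^{\mathrm{rev}}$ is precisely that operation made explicit. Your careful handling of the relabeling by $\pi(v)$, the exchange of tail- and head-incidence data, and the involution property (which guarantees whole classes map onto whole classes) supplies exactly the bookkeeping the paper leaves to the reader, including the parenthetical about permutation multiplicities being preserved.
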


However, there are some rather strong established properties of tail-equivalence classes of bidirected graphs (i.e. all edge sizes are at most $2$) that we re-present here:

\begin{theorem}[\cite{OHMTT}]
    All tail-equivalence classes of a bidirected graph are Boolean lattices ordered by circle containment.
\end{theorem}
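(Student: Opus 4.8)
The plan is to show that fixing a tail-equivalence class collapses each $\overrightarrow{P}_{1}$ to a single binary choice, and that the admissible combinations of these choices are governed by a functional digraph whose directed circles can be toggled independently.

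First I would record the data determined by a tail-equivalence class. Fixing the tail incidences fixes, for each vertex $v$, the first incidence $i_{1}^{(v)}$ of the $\overrightarrow{P}_{1}$ rooted at $v$, and hence the edge $e_{v}=\omega(i_{1}^{(v)})$ it enters. Since $G$ is a bidirected graph, $e_{v}$ has exactly two attachments: the fixed one at $v$ and one further attachment at a uniquely determined vertex $\tau(v)$ (with $\tau(v)=v$ exactly when $e_{v}$ is a loop). The head incidence of the path at $v$ therefore admits only two values: it equals $i_{1}^{(v)}$, giving a backstep with head $v$, or it is the opposite incidence of $e_{v}$, giving the adjacency with head $\tau(v)$. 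Thus each contributor of the class is encoded by its set $S\subseteq V$ of \emph{crossing} vertices, and its head map is $\eta(v)=\tau(v)$ for $v\in S$ and $\eta(v)=v$ otherwise. Crucially, $v\mapsto\tau(v)$ is a genuine function of out-degree one; this is the single place the hypothesis that every edge has exactly two attachments is used, and it is exactly what fails for larger edges.

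Next I would determine which sets $S$ yield contributors, that is, for which $\eta$ is a bijection of $V$. Since $\eta$ is the identity off $S$, injectivity forces $\tau(S)\subseteq S$ and $\tau|_{S}$ injective; on a finite set this gives $\tau(S)=S$ with $\tau|_{S}$ bijective, and conversely any such $S$ makes $\eta$ a permutation. Hence the admissible $S$ are precisely the $\tau$-invariant sets on which $\tau$ restricts to a bijection, which for a function of out-degree one are exactly the unions of directed circles of the functional digraph of $\tau$. These are the images of the directed circles of the contributor, matching the circle terminology of the statement, and the empty set recovers the all-backstep identity representative.

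The structural conclusion then follows. In a functional digraph every vertex lies on at most one directed circle, so the circles $C_{1},\dots,C_{k}$ of $\tau$ are pairwise vertex-disjoint and can be activated or deactivated independently; every admissible crossing set is the union of some subset of them, and distinct subsets yield distinct contributors. This furnishes a bijection between the tail-equivalence class and the power set $2^{\{C_{1},\dots,C_{k}\}}$, which I would check is order-preserving: $c\le c'$ under circle containment precisely when the circle-set of $c$ is contained in that of $c'$, i.e. when $S_{c}\subseteq S_{c'}$. Because the circles are disjoint, arbitrary unions and intersections of activated circle-sets remain admissible, so meets and joins exist and coincide with set intersection and union, exhibiting the class as the Boolean lattice on $\{C_{1},\dots,C_{k}\}$. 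I expect the main obstacle to be the clean bookkeeping of the bijection---verifying that toggling one disjoint circle never disturbs the heads outside it---while the genuinely load-bearing idea is the out-degree-one reduction that turns ``exactly two attachments'' into a binary backstep/cross decision and circle containment into Boolean containment.
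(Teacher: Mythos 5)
The paper itself offers no proof of this statement---it is imported from \cite{OHMTT} as background---so there is no internal argument to compare against; judged on its own, your proof is correct and is essentially a clean reconstruction of the activation-class argument of that reference. Your out-degree-one map $\tau$ (well-defined precisely because every edge has exactly two attachments) packages what \cite{OHMTT} calls activation data: the reduction of each $\overrightarrow{P}_{1}$ to a binary backstep/cross choice, the characterization of admissible crossing sets $S$ as those with $\tau(S)=S$ and $\tau|_{S}$ bijective (hence, by finiteness, exactly the unions of the pairwise disjoint directed cycles of the functional digraph), and the resulting isomorphism with the power set of the cycle set are all sound, including the degenerate loop case, where the two head choices both fix $v$ but use distinct incidences, so the $1$-cycle legitimately toggles a backstep against a loop and the two contributors are distinct.

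One point deserves a caveat. A $2$-cycle of $\tau$ can traverse a \emph{single} edge twice: if the tails of $v$ and $w$ map to the two incidences of the same edge $e$, then $\tau(v)=w$ and $\tau(w)=v$, and activating this cycle yields a contributor whose nontrivial component is a repeated adjacency---precisely the non-edge-monic configuration appearing in Case 1 of Theorem \ref{t:contcancel}. Its image is \emph{not} a circle under this paper's set-based definition, since the set of the closed walk $(v,e,w,e,v)$ collapses to the path set and circles require distinct edges and incidences. So your assertion that the toggled cycles ``are the images of the directed circles of the contributor, matching the circle terminology of the statement'' fails in this degenerate case. The Boolean lattice structure you derive is untouched (the $2$-cycle is still a disjoint, independently toggleable cycle of $\tau$, and distinct subsets still give distinct contributors because the incidence choices differ), but ``ordered by circle containment'' must then be read as containment of the activated cycle sets of the associated permutations, with these doubled-edge closed walks admitted as degenerate circles, which is how \cite{OHMTT} treats them. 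This is a labeling issue, not a gap in the lattice argument, but a careful write-up should flag it explicitly.
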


For ordinary graphs, each Boolean lattice tail-equivalence class alternates sign by rank, and is a very quick combinatorial way to see that $\det(\mathbf{M}_G) = 0$.

\begin{theorem}[$k$-arborescences \cite{IH2}]
    Every single element tail-equivalence class of $k$-reduced contributors of a bidirected graph is unpacking equivalent to $k$-arborescences.
\end{theorem}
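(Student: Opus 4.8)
The plan is to translate contributors into a functional-digraph model of their tail data and then read off the singleton condition through the Boolean lattice theorem. For a bidirected graph every $\overrightarrow{P}_{1}$-image is pinned down by the incident edge selected at its tail together with a single binary choice: backstep, or cross to the opposite endpoint. First I would record, for each of the $|V|-k$ surviving tails of a $k$-reduced contributor, the arc from its vertex to the opposite endpoint of its chosen edge. This yields a partial-functional digraph $D$ on $V$ in which the $k$ reduced tails $u_{1},\dots,u_{k}$ are the only out-degree-$0$ vertices (the prospective roots) and every other vertex has out-degree exactly $1$. The point of this encoding is that backsteps and crossings are unified: a crossing \emph{is} its arc in $D$, while a backstep carries the same arc as a latent datum that unpacking will later realize.

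Next I would identify the singleton condition combinatorially. Mirroring the Boolean lattice theorem, a circle that may be freely activated on fixed first-step data corresponds exactly to crossing coherently around a directed cycle of $D$ while the off-cycle vertices backstep; since $D$ has out-degree at most $1$ its directed cycles are vertex-disjoint and may be toggled independently, so the tail-equivalence class has $2^{c}$ elements, where $c$ is the number of directed cycles of $D$, and is a singleton precisely when $D$ is acyclic. A partial-functional digraph that is acyclic with exactly the $k$ sinks $u_{1},\dots,u_{k}$ is forced to be a disjoint union of $k$ spanning arborescences oriented toward those roots, i.e. a $k$-arborescence. The covering constraint that the surviving heads exhaust $V\setminus W$ is then automatic: for each removed map $u_{i}\mapsto w_{i}$ the cycle of the underlying permutation through $u_{i}$ is severed into a forced directed $w_{\pi(i)}$-to-$u_{i}$ path, so the head-set loses exactly $W$ while the backsteps off these paths claim every remaining vertex.

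Finally I would realize the claimed equivalence by unpacking. Extending each backstep across its chosen edge replaces it with precisely its arc in $D$, and every forced crossing already is such an arc, so unpacking carries the unique member of a singleton class to $D$ itself, which is the $k$-arborescence produced above. Conversely, any spanning $k$-arborescence toward $\{u_{1},\dots,u_{k}\}$ prescribes a first step at each non-root vertex and hence a singleton tail-equivalence class, so unpacking is a bijection between singleton classes and $k$-arborescences. The main obstacle I expect lies in the off-diagonal bookkeeping when $U\neq W$: there the minimum element of a reduced class is no longer all-backsteps but carries the forced cut-path crossings, and one must verify that severing each permutation cycle at its removed map produces a consistent acyclic orientation and that the induced matching $w_{\pi(i)}\leftrightarrow u_{i}$ between the missing heads and the roots agrees with tree-membership in the resulting arborescence, rather than merely producing the correct edge count.
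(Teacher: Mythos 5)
Note first that the paper does not actually prove this statement---it is imported verbatim from \cite{IH2}, with the Boolean-lattice theorem of \cite{OHMTT} stated just above it as the supporting structure---so your argument can only be measured against that cited machinery, and against it your proof is correct but packaged differently. Where the sources work with activation (tail-equivalence) classes directly, you encode the fixed tail data as a partial functional digraph $D$ with out-degree $1$ off the $k$ reduced vertices; this simultaneously re-derives the Boolean-lattice theorem (your $2^{c}$ count, $c$ the number of directed cycles of $D$, is exactly the rank-$c$ lattice of circle activations) and converts the singleton condition into acyclicity of $D$, after which ``acyclic, out-degree $\le 1$, exactly $k$ sinks'' forces a disjoint union of $k$ arborescences. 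That last step is right but deserves a line: an undirected cycle in $D$ would make each of its vertices spend its unique out-arc on the cycle, producing a directed cycle, so directed acyclicity already yields an underlying forest, and an arc count gives one sink per tree. Two items to add. First, loops: in a bidirected graph a loop at $v$ gives the arc $v \to v$, a directed $1$-cycle, and indeed the backstep and the genuine loop traversal (which uses both incidences, as the paper stresses in distinguishing loops from backsteps) are distinct tail-equivalent contributors; your acyclicity criterion covers this only if ``cross to the opposite endpoint'' is read incidence-wise, so say so. Second, the off-diagonal verification you flag as the main obstacle closes by your own mechanism: since $D$ has out-degree at most $1$, a vertex on a directed cycle can never reach a sink by following out-arcs, so the forced $w$-to-$u$ cut-paths and the toggleable cycles are vertex-disjoint; hence the forced crossings are consistent, singleton remains equivalent to acyclicity, and the matching of missing heads to roots is exactly tree-membership, because the forced path from each $w_i$ follows out-arcs and must terminate at the unique sink of its tree. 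With those two points written out, your argument is complete and arguably more elementary than invoking the activation-class formalism; what it forgoes is the lattice packaging that the paper reuses elsewhere (e.g.\ in the cancellation arguments of Section~\ref{sec:EM}).
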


\begin{example}
Again, for ordinary graphs this provides a new combinatorial interpretation of the Matrix-tree Theorem. Figure \ref{fig:v2v2l} depicts the non-cancellative (single-element) Boolean lattices for the $(v_2,v_2)$-cofactor.

Three tail-equivalence classes for oriented hypergraph $G_1$ are shown in Figure \ref{fig:tec2} within each circle. Those contributors that have a $v_2 \rightarrow v_2$ map are dashed, and those that do not have a $v_2 \rightarrow v_2$ are greyed out.

\begin{figure}[H]
    \centering
    \includegraphics{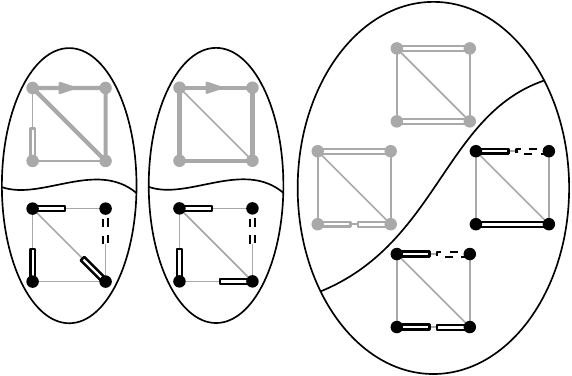}
    \caption{Three tail-equivalence classes and $v_2$-reduced tail-equivalence classes for $G_1$.}
    \label{fig:tec2}
\end{figure} 

Notice that the backsteps of the single element ($v_2,v_2$)-reduced tail-equivalence classes naturally extend into spanning trees directed toward $v_2$.
\end{example}

While Theorem \ref{t:Main} holds for all integer matrix Laplacians through their associated oriented hypergraph, there is not a Boolean structure to rely upon.

\begin{lemma}
    An incidence-simple single-edge hypergraph on $n$ vertices $E_n$ contains $n!$ contributors that follow the Stirling numbers of 1st kind.
\end{lemma}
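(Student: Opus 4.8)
The plan is to make the structure of $E_n$ fully explicit and then translate the enumeration of contributors into a bare count of permutations, after which the cycle statistics yield the Stirling distribution for free. Write $V=\{v_1,\dots,v_n\}$ and let $e$ be the single edge. Incidence-simplicity, together with the requirement that $e$ meet all $n$ vertices, forces exactly one incidence $i_k$ at each vertex, with $\varsigma(i_k)=v_k$ and $\omega(i_k)=e$; in particular $e$ carries precisely the $n$ incidences $i_1,\dots,i_n$. The first step is to enumerate the $\overrightarrow{P}_{1}$-images leaving a fixed vertex $v_k$. Since $i_k$ is the only incidence at $v_k$, every such image must enter $e$ through $i_k$, so the image is determined by its exit incidence at $e$, of which there are exactly $n$. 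Exiting again through $i_k$ reuses the incidence and produces the backstep at $v_k$ (a fixed point), while exiting through $i_j$ with $j\neq k$ produces the directed adjacency $v_k\to v_j$. Hence there are exactly $n$ first steps from $v_k$, and their heads run over all of $V$, each attained in a unique way.

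Second, I would assemble these into the bijection with $S_n$. A contributor of $E_n$ chooses one $\overrightarrow{P}_{1}$-image at every vertex subject to the heads covering $V$; as the ambient vertex set has size $n$, this is exactly the requirement that $v_k\mapsto(\text{head at }v_k)$ be a permutation $\pi$. Conversely every $\pi\in S_n$ is realizable, and by the uniqueness from the first step it is realized by one and only one contributor. Thus $c\mapsto\pi$ is a bijection $\mathfrak{C}(E_n)\to S_n$, giving $\lvert\mathfrak{C}(E_n)\rvert=n!$. I expect this uniqueness --- hence the exact value $n!$ rather than something larger --- to be the only genuinely load-bearing point, and it rests entirely on incidence-simplicity: if $e$ were allowed parallel incidences at a vertex, a single $\pi$ would acquire several realizations and the count would inflate, which is precisely the phenomenon one must rule out here.

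Finally, I would read off the Stirling statistic by matching components to cycles. Under the bijection a component of $c$ is a closed directed walk through $e$ whose successive heads trace out exactly one cycle of $\pi$: a $k$-cycle becomes a circle of length $k$ and a fixed point becomes a backstep, with no loops present to blur the distinction. Therefore the number of contributors of $E_n$ with exactly $m$ components equals the number of permutations of $[n]$ with $m$ cycles, namely the unsigned Stirling number of the first kind $\left[{n\atop m}\right]$, and $\sum_{m}\left[{n\atop m}\right]=n!$. The same distribution also follows inductively from $\left[{n\atop m}\right]=\left[{n-1\atop m-1}\right]+(n-1)\left[{n-1\atop m}\right]$, since in passing from $E_{n-1}$ to $E_n$ the new vertex $v_n$ either forms its own backstep, creating a new component, or is spliced into one of the $n-1$ existing adjacencies $v_a\to v_{\pi(a)}$ by redirecting it as $v_a\to v_n\to v_{\pi(a)}$. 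Because the signing $\sigma$ enters only through $\asgn$ and $\csgn$ and not through the underlying incidence-preserving maps, both the count $n!$ and its cycle-distribution are independent of the chosen orientation.
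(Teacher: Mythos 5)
Your proof is correct: the paper states this lemma without any proof, and your bijection $\mathfrak{C}(E_n)\to S_n$ --- incidence-simplicity forces exactly one incidence per vertex, so each first step is determined by its exit incidence at $e$ and every permutation is realized by a unique contributor, with components matching cycles --- is precisely the argument the paper leaves implicit (cf.\ its remark that a tail-equivalence class with all backsteps into a common $3$-edge has $3!$ contributors). One small terminological caution: since the single edge $e$ is reused by every $\overrightarrow{P}_{1}$ image, the image of a $k$-cycle component is a closed walk folding back through $e$ rather than a \emph{circle} in the paper's sense (which requires distinct vertices and edges), but this affects neither the count $n!$ nor the Stirling distribution by number of components.
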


\begin{example}
There are a total of $12$ tail-equivalence classes in $G_2$ with their identity permutation representative consisting of only backsteps in Figure \ref{fig:tec3}. tail-equivalence class (1) has all backsteps into a common $3$-edge, hence it will have $3!$ contributors.
\begin{figure}[H]
    \centering
    \includegraphics{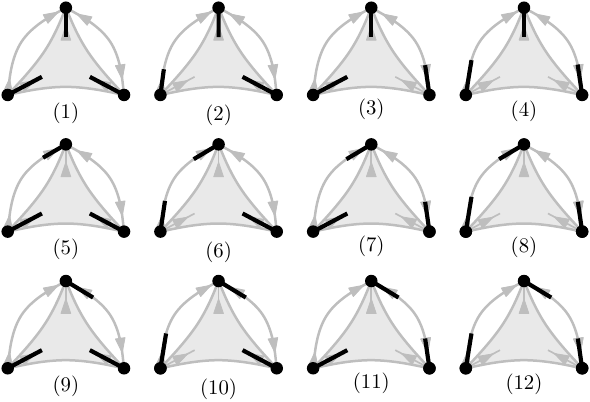}
    \caption{There are twelve tail-equivalence classes for $G_2$. Each represented by their identity-permutation representative.}
    \label{fig:tec3}
\end{figure} 

However, other identity-contributor representatives reside in some, or all, of the edges. Essentially, destroying the simplicity of having Boolean or Stirling counts. We will prove in Section \ref{ssec:emtec} that those tail-equivalence classes that have any backsteps into a common edge will necessarily cancel.
\end{example}

\subsection{Edge-Monic Contributors}
\label{ssec:emtec}

A contributor $c$ is \emph{edge-monic} if $c(e_{v}) \neq c(e_{w})$ for all distinct $v,w \in V$. We will denote the set of edge-monic contributors as $\mathfrak{M}(G)$, with ${\mathfrak{M}}(G;\mathbf{u},\mathbf{w})$, and $\mathfrak{\hat{M}}(G;\mathbf{u},\mathbf{w})$ defined as the edge-monic analogs of \emph{restricted} and \emph{reduced} contributors. Similarly, let $\mathfrak{N}(G)$ denote the set of non-edge-monic contributors. A tail-equivalence class (head-equivalence class) is \emph{edge-monic} if every contributor in it is edge-monic. We have the following trivial observation.

\begin{lemma}
Every contributor in a tail-equivalence class (head-equivalence class) is edge-monic if, and only if, one contributor in the class is edge-monic.
\label{l:TECEM}
\end{lemma}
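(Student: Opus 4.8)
The plan is to show that edge-monicity is completely determined by the tail-incidence images of a contributor, which are by definition held constant throughout a tail-equivalence class; the head-equivalence statement then follows by running the identical argument on the head-incidences.

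First I would fix notation for the domain. The domain of a contributor $c$ is a disjoint union of directed paths $\overrightarrow{P}_{1}$ indexed by $V$, where the $v$-th component is a copy $(t_{v},i^{(v)},e_{v},j^{(v)},h_{v})$ of $\overrightarrow{P}_{1}$ with tail vertex $t_{v}$ (so $c(t_{v})=v$), edge $e_{v}$, head vertex $h_{v}$, and tail- and head-incidences $i^{(v)},j^{(v)}$. The edge-image of the $v$-th component is $c(e_{v})\in E$, and by definition $c$ is edge-monic precisely when the assignment $v\mapsto c(e_{v})$ is injective.

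The crux is the observation that the edge-image of each component is pinned down by its tail-incidence image. Since $c$ is incidence-preserving it commutes with the attachment function $\omega$, so $\omega\big(c(i^{(v)})\big)=c\big(\omega(i^{(v)})\big)=c(e_{v})$; that is, $c(e_{v})=\omega\big(c(i^{(v)})\big)$ depends on $c(i^{(v)})$ alone. Two contributors $c,c'$ are tail-equivalent exactly when $c(i^{(v)})=c'(i^{(v)})$ for every $v\in V$, and consequently $c(e_{v})=\omega\big(c(i^{(v)})\big)=\omega\big(c'(i^{(v)})\big)=c'(e_{v})$ for all $v$. Thus the whole edge-assignment $v\mapsto c(e_{v})$ is an invariant of the tail-equivalence class, and edge-monicity, being the injectivity of this invariant, is a property of the class rather than of the individual contributor. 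The lemma follows at once: a tail-equivalence class is nonempty (it contains its identity-permutation, all-backsteps representative), so if all members are edge-monic then so is any one of them; conversely, if a single $c$ in the class is edge-monic then every $c'$ shares the same injective edge-assignment and is edge-monic as well. For a head-equivalence class the argument is verbatim with the head-incidence $j^{(v)}$ replacing $i^{(v)}$, using the same identity $c(e_{v})=\omega\big(c(j^{(v)})\big)$; here the head-incidences have distinct images across components because the head-vertices $c(h_{v})$ cover $V$ bijectively.

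I do not expect a genuine obstacle, consistent with the paper's labeling this a trivial observation. The only point requiring care is articulating that fixing the tail-incidence (resp. head-incidence) image of a component already fixes, via $\omega$, the edge that component uses; once this is in place, edge-monicity cannot vary within a class and both directions are immediate from the definitions.
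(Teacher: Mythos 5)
Your proof is correct. The paper records Lemma~\ref{l:TECEM} as a trivial observation and supplies no proof of its own, and your argument --- that incidence preservation forces $c(e_{v})=\omega\bigl(c(i^{(v)})\bigr)$, so the entire edge assignment $v\mapsto c(e_{v})$ is an invariant of the tail-equivalence class and edge-monicity (injectivity of that assignment) cannot vary within the class --- is exactly the intended reasoning. One small point: for head-equivalence the argument is not literally ``verbatim,'' since head-equivalent contributors are matched component-to-component by head vertex rather than by the tail index $v$; your closing remark that the head images cover $V$ bijectively (so the head-incidence images form a set of distinct incidences, one per vertex, determining the multiset of used edges via $\omega$) is precisely what licenses that re-indexing and should be stated as such rather than as an afterthought.
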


\begin{example}
    It is easily checked that of the contributors in Figure \ref{fig:tec3} only (4), (7), and (10) are edge monic.
\end{example}

\begin{theorem}
    Let $N$ be a non-edge-monic tail-equivalence (or head-equivalence) class of an oriented hypergraph $G$, then
    \begin{align*}
        \dsum_{c \in N} \csgn(c) = 0.
    \end{align*}
    \label{t:contcancel}
\end{theorem}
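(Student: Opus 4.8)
The plan is to build a fixed-point-free, sign-reversing involution on $N$, after first recording a convenient factorization of $\csgn$. Write $\pi_c$ for the permutation associated with a contributor $c\in N$ and, for each $v\in V$, let $i_v$ and $j_v$ denote the images of the tail- and head-incidences of the $\overrightarrow{P}_{1}$ rooted at $v$. I claim that
\[
\csgn(c)=\sgn(\pi_c)\prod_{v\in V}\bigl(-\sigma(i_v)\sigma(j_v)\bigr).
\]
The first factor comes from the identity $(-1)^{ec(\check c)}=\sgn(\pi_c)$, valid because $ec$ counts exactly the even-length cycles of $\pi_c$ while loops and backsteps are length-$1$ (odd) cycles, and because $\check c=c$ on an unreduced class. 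The second factor comes from grouping the single-step adjacency signs $-\sigma(i_v)\sigma(j_v)$ by component: their product over a component is that component's $\asgn$, so the product over all genuine components contributes $(-1)^{nc(c)}$, while each backstep contributes $-\sigma(i_v)\sigma(i_v)=-1$, supplying the remaining $(-1)^{bs(c)}$. Hence the displayed product equals $(-1)^{nc(c)+bs(c)}$ and the factorization holds for every contributor.

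Next I would use non-edge-monicity to locate the swap. By the definition of tail-equivalence the edge each path enters, $E(v)=c(e_v)$, is constant on $N$, and by Lemma \ref{l:TECEM} non-edge-monicity of $N$ means some (equivalently every) member repeats an edge. So there is an edge $e$ with $|\{v:E(v)=e\}|\ge 2$, and I would fix once and for all the two smallest such vertices $v_0,w_0$ under a fixed ordering of $V$. Because $E$ depends only on the class-constant tail data, the triple $(e,v_0,w_0)$ is the same for every $c\in N$. Define $\phi:N\to N$ to swap the head-incidences of the paths rooted at $v_0$ and $w_0$: if $c$ sends these to $v_0\!\to\! e\!\to^{\,j_{v_0}}\!\pi_c(v_0)$ and $w_0\!\to\! e\!\to^{\,j_{w_0}}\!\pi_c(w_0)$, then $\phi(c)$ sends them to $v_0\!\to\! e\!\to^{\,j_{w_0}}\!\pi_c(w_0)$ and $w_0\!\to\! e\!\to^{\,j_{v_0}}\!\pi_c(v_0)$, leaving all other paths fixed. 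Since $j_{v_0},j_{w_0}$ both attach to $e$, these are legitimate $\overrightarrow{P}_{1}$-images, the tail data is unchanged so $\phi(c)\in N$, and $\pi_{\phi(c)}=\pi_c\circ(v_0\,w_0)$ is again a permutation.

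Then I would verify the two properties of $\phi$. It is an involution since a second swap restores the heads, and it is fixed-point-free because $\phi(c)=c$ would force $\pi_c(v_0)=\pi_c(w_0)$, impossible for a permutation with $v_0\neq w_0$. For the sign, the factorization does the work: composing $\pi_c$ with the transposition $(v_0\,w_0)$ flips $\sgn(\pi_c)$, while $\prod_{v}\bigl(-\sigma(i_v)\sigma(j_v)\bigr)$ is unchanged, since $\phi$ only reassigns the factors $\sigma(j_{v_0})$ and $\sigma(j_{w_0})$ between the two paths and leaves every tail factor fixed. Thus $\csgn(\phi(c))=-\csgn(c)$, and pairing each $c$ with $\phi(c)$ yields $\sum_{c\in N}\csgn(c)=0$. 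For a head-equivalence class I would invoke the cycle-reversal Lemma stated earlier: reversing every cycle sends $\pi_c$ to $\pi_c^{-1}$, preserves the edge used by each path (hence edge-monicity) and preserves each single-step adjacency sign, so it preserves $\csgn$ and carries a non-edge-monic head-class onto a non-edge-monic tail-class, reducing this case to the one already settled.

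The main obstacle I anticipate is the first step: pinning the factorization down so that backsteps—treated separately from genuine $\asgn$-components in the framework—are correctly absorbed into $\prod_v(-\sigma(i_v)\sigma(j_v))$, and confirming $ec(\check c)=ec(c)$ on an unreduced class. Once the factorization is secured the involution argument is essentially forced, the only remaining care being that the selection of $(e,v_0,w_0)$ is genuinely class-invariant, so that $\phi$ is well defined and self-inverse.
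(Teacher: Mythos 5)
Your proof is correct, and its combinatorial engine is the same as the paper's: you pair contributors by right-multiplying the associated permutation by the transposition of two vertices whose tails lie in a common edge, which is exactly the paper's move of ``right multiplication by the $2$-cycle $(12)$'' that swaps the two head images within that edge. Where you genuinely diverge is in verifying that the pairing reverses sign. The paper splits into three cases (both vertices in backsteps, one in a backstep, both in nontrivial cycles) and tracks by hand how $ec(\check{c})$, $nc(c)$, and $bs(c)$ each change; your factorization $\csgn(c)=\sgn(\pi_c)\prod_{v}\bigl(-\sigma(i_v)\sigma(j_v)\bigr)$ collapses all three cases into one line, since the local product is manifestly invariant under the swap (the factors $\sigma(j_{v_0})$ and $\sigma(j_{w_0})$ are merely redistributed) while $\sgn(\pi_c)$ flips. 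Your factorization checks out against the paper's conventions --- backsteps contribute $-\sigma(i_v)^2=-1$ and are excluded from $nc$, loops and backsteps are odd (length-$1$) cycles so $(-1)^{ec(\check c)}=\sgn(\pi_c)$, and per-step signs $-\sigma(i)\sigma(j)$ multiply to each component's $\asgn$ --- so, for instance, the paper's Case~1 (two backsteps merging into a repeated adjacency, with the two backstep signs offset by the two new adjacency signs and a new even cycle appearing) becomes an instance of the general computation rather than a case to check. You also supply precision the paper compresses into ``there are exactly half the contributors'': a class-invariant choice of the triple $(e,v_0,w_0)$ (valid because tail images are constant on the class and, via Lemma~\ref{l:TECEM}, the repeated edge is class-constant), an explicit fixed-point-freeness argument, and a reduction of the head-equivalence case to the tail case through the cycle-reversal lemma, where the paper simply runs the argument with ``heads'' in parentheses. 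The trade-off: the paper's case analysis stays close to the graphical intuition of merging and splitting components, while your factorization is uniform, less error-prone, and makes the sign bookkeeping mechanical; the one point shared with the paper that deserves care is that the theorem is invoked in Theorem~\ref{t:Main2} for classes of \emph{reduced} contributors, where $\check{c}\neq c$, so one should note (as neither proof does explicitly) that the swap acts on the unreduced $\check{c}$ and descends to the reduced class, the removed $\mathbf{u}\rightarrow\mathbf{w}$ maps contributing fixed factors throughout.
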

\begin{proof}
    Suppose $v_{1}$ and $v_{2}$ have their tails (heads) in the same edge. Since their tails (heads) are in the same edge, right multiplication by the 2-cycle $(12)$ reverses the head images, thus, preserving the tail-equivalence (head-equivalence) classes and binning $v_{1}$ and $v_{2}$ if they started separate, or separating them if they started together. Since transpositions are their own inverse, there are exactly half the contributors in the tail-equivalence class where $v_{1}$ and $v_{2}$ are in separate cycles. Consider these three cases:
    
\textit{Case 1:} Let $v_{1}$ and $v_{2}$ both be in backsteps. Consider the permutations for these types of contributors. Right multiply these permutations by the 2-cycle $(12)$ corresponding to $v_{1}$ and $v_{2}$. This turns the $2$ backsteps into a 2-cycle, which shows up graphically as a repeated adjacency due to the non-edge-monicness. The 2 backsteps are offset by the two negative signs in the new adjacencies. The final sign changes due to the inclusion of a new even cycle.

\textit{Case 2:} Suppose only one of $v_{1}$ and $v_{2}$ are in a backstep. We will go with $v_{1}$ being in a backstep and $v_{2}$ in a non-trivial cycle. Consider the permutations for these types of contributors. Right multiply by the 2-cycle $(12)$ corresponding to $v_{1}$ and $v_{2}$. The loss of a backstep is offset by the creation of a new adjacency. The final sign changes due to the loss/gain of an even cycle.

\textit{Case 3:} Let $v_{1}$ and $v_{2}$ be in disjoint non-trivial cycles of length $m$ and $n$ respectively. Consider the permutations for these types of contributors. Right multiply by the 2-cycle $(12)$ corresponding to $v_{1}$ and $v_{2}$. This will create a new cycle of length $m+n$ with the same incidence set. If $m$ and $n$ are both even, then $m+n$ is even and the final sign changes due to the loss of an even cycle. If $m$ is even and $n$ is odd or vice versa, then $m+n$ is odd and the final sign changes due to the loss of an even cycle. If $m$ and $n$ are both odd then $m+n$ is even and the final sign changes due to the gain of an even cycle. \qed 
\end{proof}

Using what we now know about non-edge monic tail-equivalence classes we can restate Theorem \ref{t:Main}.

\begin{theorem}[Total-minor Polynomial v2]
Let $G$ be an oriented hypergraph with Laplacian matrix $\mathbf{L}_{G}$. Then, 
\begin{align*}
    \chi(\mathbf{L}_{G},\mathbf{X}) = \dsum\limits_{ [\mathbf{u},\mathbf{w}]}\left( \dsum\limits_{\substack{c \in \mathfrak{\hat{M}}_{\neq 0}(G^0;\mathbf{u},\mathbf{w}) \\ }} \csgn(c) \right) \dprod\limits_{i}x_{u_{i},w_{i}}.
\end{align*}
\label{t:Main2}
\end{theorem}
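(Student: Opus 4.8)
The plan is to derive Theorem \ref{t:Main2} from the original Total-minor Polynomial Theorem (Theorem \ref{t:Main}) by showing that the non-edge-monic contributors contribute nothing to any coefficient. The key observation is that the partition of $\mathfrak{\hat{C}}_{\neq 0}(G^0;\mathbf{u},\mathbf{w})$ into edge-monic contributors $\mathfrak{\hat{M}}_{\neq 0}(G^0;\mathbf{u},\mathbf{w})$ and non-edge-monic contributors respects the tail-equivalence partition. I would first invoke Lemma \ref{l:TECEM}, which guarantees that edge-monicness is constant across each tail-equivalence class, so that every tail-equivalence class is either entirely edge-monic or entirely non-edge-monic. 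Consequently the set $\mathfrak{\hat{C}}_{\neq 0}(G^0;\mathbf{u},\mathbf{w})$ decomposes as a disjoint union of the edge-monic tail-equivalence classes together with the non-edge-monic ones, with no class split between the two types.

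With this decomposition in hand, the main step is to regroup the inner sum in Theorem \ref{t:Main} according to tail-equivalence classes. For each fixed $[\mathbf{u},\mathbf{w}]$, I would write
\begin{equation*}
\dsum_{c \in \mathfrak{\hat{C}}_{\neq 0}(G^0;\mathbf{u},\mathbf{w})} \csgn(c) = \dsum_{\substack{N \text{ edge-monic}}} \dsum_{c \in N} \csgn(c) + \dsum_{\substack{N \text{ non-edge-monic}}} \dsum_{c \in N} \csgn(c),
\end{equation*}
where both outer sums range over tail-equivalence classes whose members are restricted to the fixed $[\mathbf{u},\mathbf{w}]$. Here I must be careful that the tail-equivalence classes being summed are restricted to those contributors respecting $c(u_i) = w_i$; the essential point is that right-multiplication by the transposition $(12)$ used in Theorem \ref{t:contcancel} preserves the constraint set $[\mathbf{u},\mathbf{w}]$ when both tails lie in a common edge, so the cancellation argument of Theorem \ref{t:contcancel} applies within the restricted setting without modification. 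By Theorem \ref{t:contcancel}, every inner sum over a non-edge-monic class $N$ vanishes, so the entire second double-sum is zero and the first double-sum collapses back to $\dsum_{c \in \mathfrak{\hat{M}}_{\neq 0}(G^0;\mathbf{u},\mathbf{w})} \csgn(c)$.

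The hard part will be verifying that the $(12)$-involution used in Theorem \ref{t:contcancel} genuinely respects the restricted and reduced structure, i.e. that pairing contributors within a non-edge-monic restricted class does not escape the class $\mathfrak{\hat{C}}_{\neq 0}(G^0;\mathbf{u},\mathbf{w})$ and does not disturb the already-removed $\mathbf{u} \to \mathbf{w}$ maps. I would argue that the transposition acts only on the two head images of $v_1, v_2$ whose tails share an edge, leaving all other path images — and in particular the removed pairwise maps — untouched; thus reducedness and the restriction constraints are preserved under the pairing, and non-zero contributors pair with non-zero contributors. Finally, substituting the collapsed inner sum back into the outer sum over $[\mathbf{u},\mathbf{w}]$ and multiplying by the monomials $\dprod_i x_{u_i,w_i}$ yields exactly the stated formula, completing the proof.
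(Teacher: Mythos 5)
Your proposal is correct and takes essentially the same route as the paper's proof: decompose the inner sum of Theorem \ref{t:Main} over tail-equivalence classes, observe (via Lemma \ref{l:TECEM}) that each class is wholly edge-monic or wholly non-edge-monic, and invoke Theorem \ref{t:contcancel} to annihilate every non-edge-monic class, leaving exactly the sum over $\mathfrak{\hat{M}}_{\neq 0}(G^0;\mathbf{u},\mathbf{w})$. Your additional verification that the $(12)$-involution respects the restricted and reduced structure is a detail the paper leaves implicit, and your argument for it is sound, since the transposition only swaps the head images of two non-removed maps and fixes the incidence set, so the constraints $c(u_i)=w_i$, the removed $\mathbf{u}\rightarrow\mathbf{w}$ maps, and non-zeroness are all preserved.
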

\begin{proof}

Let $\mathcal{N}$ be the set of non-edge-monic tail equivalence (head equivalence) classes and let $\mathcal{M}$ be the set of edge-monic tail equivalence (head equivalence) classes.

We have
\begin{align*}
    \mathfrak{C} = \mathfrak{N}(G) \cup \mathfrak{M}(G) = \bigcup_{N \in \mathcal{N}} N \cup \bigcup_{M \in \mathcal{M}} M
\end{align*}

Thus, the inner sum can be rewritten as
\begin{align*}
\dsum\limits_{\substack{c \in \mathfrak{\hat{C}}_{\neq 0}(G^{0};\mathbf{u},\mathbf{w}) \\ }} (-1)^{ec(\check{c})+nc(c)+bs(c)} &= \dsum\limits_{\substack{c \in \mathfrak{\hat{N}}_{\neq 0}(G^{0};\mathbf{u},\mathbf{w}) \\ }} (-1)^{ec(\check{c})+nc(c)+bs(c)} \\
&+ \dsum\limits_{\substack{c \in \mathfrak{\hat{M}}_{\neq 0}(G^{0};\mathbf{u},\mathbf{w}) \\ }} (-1)^{ec(\check{c})+nc(c)+bs(c)}
\end{align*}

Rewriting as sums over tail-equivalence classes, 
\begin{align*}
= \dsum\limits_{\substack{N \in \mathcal{\hat{N}}_{\neq 0}(G^{0};\mathbf{u},\mathbf{w}) \\ }} \dsum\limits_{c \in N} (-1)^{ec(\check{c})+nc(c)+bs(c)} + \dsum\limits_{\substack{M \in \mathcal{\hat{M}}_{\neq 0}(G^{0};\mathbf{u},\mathbf{w}) \\ }} \dsum\limits_{c \in M} (-1)^{ec(\check{c})+nc(c)+bs(c)}.
\end{align*}

By Theorem \ref{t:contcancel} we have
\begin{align*}
&= \dsum\limits_{\substack{N \in \mathcal{\hat{N}}_{\neq 0}(G^{0};\mathbf{u},\mathbf{w}) \\ }}  0 +  \dsum\limits_{\substack{M \in \mathcal{\hat{M}}_{\neq 0}(G^{0};\mathbf{u},\mathbf{w}) \\ }} \dsum\limits_{c \in M} (-1)^{ec(\check{c})+nc(c)+bs(c)} \\
&= \dsum\limits_{\substack{c \in \mathfrak{\hat{M}}_{\neq 0}(G^{0};\mathbf{u},\mathbf{w}) \\ }} (-1)^{ec(\check{c})+nc(c)+bs(c)}, 
\end{align*}  
which gives the desired result. \qed
\end{proof}

\subsection{Main Theorem Examples}

Below is the total-minor polynomial for $G_2$ from Figure \ref{fig:OHs} and the three bolded terms are calculated using their contributor maps.
\begin{align*}
\det(\mathbf{X - L}_{G_{2}}) &= -1 + 2x_{11} - 3x_{12} - 2x_{13} \mathbf{ - 3x_{21}} + 6x_{22} + 4x_{23} - 2x_{31} + 4x_{32} + 3x_{33} \\
& \mathbf{- 3x_{11}x_{22}} + 3x_{12}x_{21} \mathbf{- 2x_{11}x_{23}} + 2x_{13}x_{21} - 2x_{11}x_{32} + 2x_{12}x_{31} - 2x_{11}x_{33} \\
&+ 2x_{13}x_{31} + x_{12}x_{33} - x_{13}x_{32} + x_{21}x_{33} - x_{23}x_{31} - 2x_{22}x_{33} + 2x_{23}x_{32} \\
&+ x_{11}x_{22}x_{33} - x_{11}x_{23}x_{32} - x_{12}x_{21}x_{33} + x_{12}x_{23}x_{31} + x_{13}x_{21}x_{32} - x_{13}x_{22}x_{31}.
\end{align*}

First, consider all contributors in ${\mathfrak{\hat{C}}}(G_2;(v_1,v_2),(v_1,v_2))$. That is, contributors where $v_1 \mapsto v_1$ and $v_2 \mapsto v_2$ exists, but these backsteps have been removed in order to choose the $x_{1,1}$ and $x_{2,2}$ monomials. The remaining reduced contributors these are shown in Figure \ref{fig:TMP1}. The $\csgn$ of each is $-1$ as they each contain exactly one backstep, no negative components, and their original contributor was from an identity, so they contained no even cycles. 

\begin{figure}[H]
    \centering
    \includegraphics[scale=1]{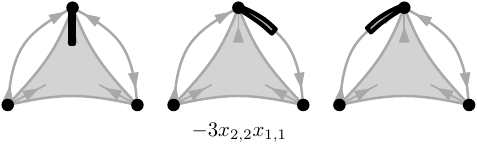}
    \caption{Example 1}
    \label{fig:TMP1}
\end{figure}

Next, consider all contributors in ${\mathfrak{\hat{C}}}(G_2;(v_1,v_2),(v_1,v_3))$. That is, contributors where $v_1 \mapsto v_1$ and $v_2 \mapsto v_3$ exists. Here we lose the  $(v_1,v_1)$-backstep and the $(v_2,v_3)$-adjacency as the monomials $x_{1,1}$ and $x_{2,3}$ are chosen instead. The remaining reduced contributors these are shown in Figure \ref{fig:TMP2}. The $\csgn$ of each is $-1$ as they contain zero backsteps, no negative components and their original contributor contained a single $2$-cycle (regardless of how it was reduced). 

\begin{figure}[H]
    \centering
    \includegraphics[scale=1]{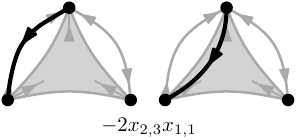}
    \caption{Example 2}
    \label{fig:TMP2}
\end{figure}

Finally, consider all contributors in ${\mathfrak{\hat{C}}}(G_2;(v_2),(v_1))$. That is, contributors where $v_2 \mapsto v_1$ and is removed in order to create the $x_{2,1}$ monomial. The remaining reduced contributors these are shown in Figure \ref{fig:TMP3}. The $\csgn$ of the top three are $-1$, the $\csgn$ of the first two on the bottom are also $-1$, while the $\csgn$ of the last two on the bottom are $+1$. Thus producing a coefficient value of $-5 + 2 = -3$.

\begin{figure}[H]
    \centering
    \includegraphics[scale=1]{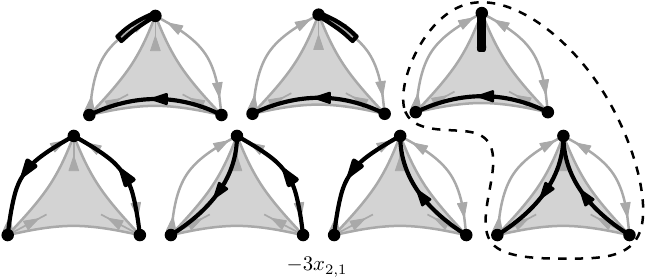}
    \caption{Example 3}
    \label{fig:TMP3}
\end{figure}

However, the rightmost two contributors in the dashed circle belong to the same non-edge-monic tail-equivalence class, and they must cancel by Theorem \ref{t:Main2}. The removal of these two contributors would produce ${\mathfrak{\hat{M}}}(G_2;(v_2),(v_1))$, the optimal collection of contributors to determine the coefficient as $-4 + 1 = -3$.

\section{Duality and classical results}

\subsection{Duality of contributors}

We now ask when is the incidence-dual of a contributor again a contributor? The \emph{contributor-dual} of a contributor $c$ in $G$ is the contributor $c^*$ in $G^*$ obtained by the mappings of $\overrightarrow{P}_{1}$ into $G^*$ that preserves (and reverses) the cyclic incidence sequences of each local component. A few clarifying observations: (1) Dualizing both the $\overrightarrow{P}_{1}$'s and $G$ produces the same set of contributors with edge-to-edge steps and $G^*$; (2) tail-equivalence classes become head-equivalence classes in $G^*$, reversing the incidence sequences allow for streamlining interpretation through tail-equivalence; (3) not every contributor has a dual. 

Contributor duality is best understood as shifting the $V \rightarrow V$ contributors to $E \rightarrow E$ by initial map for tail-equivalence over a single incidence (half of an adjacency), then dualizing so the edges become vertices. This shift is seen below in bold:
\begin{align*}
    \mathbf{V \rightarrow E \rightarrow V} \rightarrow E \\
    V \rightarrow \mathbf{E \rightarrow V \rightarrow E}.
\end{align*}
Observe that the initial $V \rightarrow E$ tail maps are shifted to the $E \rightarrow V$ head maps. Moreover, these are extended to contributors via another $V \rightarrow E$ map. A contributor dual requires both maps to produce permutation analogs that are also incidence preserving, this is shown in Figure \ref{fig:contdual}.

\begin{figure}[H]
    \centering
    \includegraphics[scale=1]{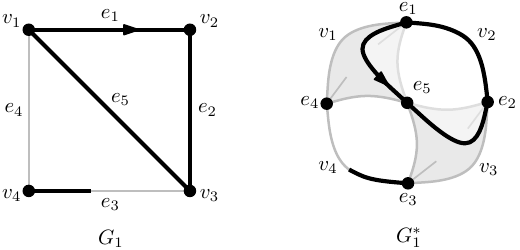}
    \caption{A contributor in $G_1$ and its dual in $G^*_1$. The arrow indicates the direction of travel of the respective path maps.}
    \label{fig:contdual}
\end{figure}

Since any single missing adjacency in a reduced contributor $c$ would become a missing co-adjacency in $c^*$, and no longer meet the definition of ``contributor,'' to ensure duality we must reduce every adjacency in a circle or leave the circle alone, given us the following Lemma.

\begin{lemma}\label{l:InCircle}
If a reduced contributor $c$ in $G$ has a contributor-dual, then every adjacency of $c$ is in a circle.
\end{lemma}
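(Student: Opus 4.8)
The plan is to prove the contrapositive: if some adjacency of the reduced contributor $c$ is \emph{not} in a circle, then $c^*$ fails to be a contributor. First I would unpack what it means for an adjacency to lie in a circle. A reduced contributor $c$ is built from the local $\overrightarrow{P}_{1}$ images, and after removal of the reduced $\mathbf{u} \to \mathbf{w}$ maps, the surviving structure decomposes into directed paths and directed circles. An adjacency ``in a circle'' is one that is traversed as part of a closed directed walk; an adjacency not in a circle is one that sits on an open directed path, i.e. a path that terminates because one of its endpoint adjacencies was removed during reduction. So the key dichotomy is: each component of the reduced contributor is either a full circle or a broken arc (path) arising from a removed $\mathbf{u}\to\mathbf{w}$ map.

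Next I would recall the mechanics of contributor duality as set up in the excerpt. The contributor-dual $c^*$ is obtained by taking the $\overrightarrow{P}_{1}$ images of $c$ and reading their cyclic incidence sequences in $G^*$, where edges become vertices and vertices become edges. The crucial point is that duality preserves the incidence sequence of each local component but reinterprets a $V \to E \to V$ adjacency as an $E \to V \to E$ co-adjacency, per the bold shift displayed before the Lemma. For this reinterpretation to yield a genuine contributor in $G^*$, every co-adjacency produced must still be a valid $\overrightarrow{P}_{1}$ image whose heads and tails both cover the vertex set of $G^*$ (namely $E$). The plan is to show that a broken arc breaks exactly this covering/preservation condition: if an adjacency $a$ of $c$ was on an open path rather than a circle, then in $G$ the incidence sequence terminates at $a$ without closing up, and dualizing this dangling incidence produces a co-adjacency missing its matching partner. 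Concretely, a single missing adjacency in $c$ becomes a missing co-adjacency in $c^*$ (as the excerpt already observes), so $c^*$ cannot meet the definition of contributor because some dualized edge-vertex fails to receive both a head and a tail image.

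The main step, then, is to formalize the statement ``a missing adjacency becomes a missing co-adjacency'' by tracking incidences explicitly. I would fix an adjacency on an open path and follow its two bounding incidences through the dualization $\varsigma \leftrightarrow \omega$, showing that the open endpoint, which in $c$ was simply the terminus of a path, becomes in $c^*$ a vertex of $G^*$ (an edge of $G$) that is required to be both a head-image and a tail-image of some $\overrightarrow{P}_{1}$ but receives only one of the two. This violates the contributor condition $\{c^*(h_v) \mid v\} = V(G^*)$ together with the tail condition $c^*(t_v)=v$. Contrapositively, if $c^*$ is a contributor then every local component must close up into a circle, which is precisely the claim that every adjacency of $c$ lies in a circle.

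I expect the main obstacle to be bookkeeping the correspondence between broken paths in $c$ and the failure of the covering condition in $c^*$ with enough care to distinguish genuinely open paths from circles that merely pass through a reduced vertex. In particular I must be careful that a circle which happens to contain a reduced $\mathbf{u}\to\mathbf{w}$ vertex but was \emph{not} broken (because the circle was left intact rather than having an adjacency removed) does dualize correctly, whereas a circle that had one adjacency removed does not; this matches the Lemma's own hint that ``we must reduce every adjacency in a circle or leave the circle alone.'' Making that distinction precise at the level of individual incidences, rather than at the coarser level of components, will be the delicate part of the argument.
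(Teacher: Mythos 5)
Your proposal is correct and follows essentially the same route as the paper, which justifies the lemma entirely by the one observation you formalize: a missing adjacency in $c$ dualizes to a missing co-adjacency in $c^*$, so any open path (a circle broken by reduction) leaves a vertex of $G^*$ without both a head and a tail image, violating the contributor definition. Your contrapositive framing and incidence-level bookkeeping simply make explicit what the paper states in the single sentence preceding the lemma.
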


Edge-monicness plays an important role in contributor duality as well.

\begin{lemma}\label{l:CDualsAreMonic}
    A reduced contributor $c$ in $G$ that has a contributor-dual must be edge-monic.
\end{lemma}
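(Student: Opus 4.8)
The plan is to prove the contrapositive: if a reduced contributor $c$ in $G$ is \emph{not} edge-monic, then $c$ has no contributor-dual. By Lemma \ref{l:InCircle}, a contributor with a dual must have every adjacency lying in a circle, so I may assume every adjacency of $c$ already sits in some circle (otherwise we are done immediately). The key structural fact I would exploit is the definition of the contributor-dual: it is built by reversing the cyclic incidence sequence of each local component and then dualizing, so that each incidence $i$ with $\varsigma(i)=v$, $\omega(i)=e$ in $c$ must correspond to a well-defined incidence in the dual map $c^*$ where the roles of $v$ and $e$ are swapped. The obstruction to duality is precisely a failure of this correspondence to be a genuine contributor, i.e.\ a failure of incidence-monicity on the $E\rightarrow E$ side.

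Concretely, suppose $c$ is not edge-monic, so there exist distinct $v,w\in V$ with $c(e_v)=c(e_w)=e$ for a common edge $e$; that is, two distinct $\overrightarrow{P}_{1}$ images pass through the same edge $e$. First I would trace what happens under dualization. When we dualize, edges become vertices and the edge $e$ becomes a vertex $e$ of $G^*$ at which two distinct path-components of the putative dual must be anchored. The central step is to show that this forces the dual map to reuse an incidence or fail to cover, contradicting the requirement that $c^*$ be a contributor (an incidence-preserving map from a disjoint union of $\overrightarrow{P}_{1}$'s whose heads cover the vertex set of $G^*$, namely $E$). Since the two adjacencies through $e$ in $c$ contribute two distinct incidences at $e$, but the reversal-and-dualization process shifts the $V\rightarrow E$ tail maps to $E\rightarrow V$ head maps (as displayed in the bold shift $V\rightarrow E\rightarrow V\rightarrow E$), the two components competing at vertex $e$ in $G^*$ cannot both receive $e$ as a head image of a \emph{single} $\overrightarrow{P}_{1}$ while remaining a disjoint union of paths. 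I would make this precise by counting: edge-monicity of $c$ is exactly the condition that the induced assignment of edges to path-components is injective, which is what the dual needs in order for its head images to cover $E$ bijectively.

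The cleanest route is therefore to observe the symmetry between the two failure modes. In a contributor, the heads and tails each cover $V$; dually, the contributor-dual needs its heads and tails to each cover $E=\ob(G^*)$. The tail images of $c^*$ are determined by the edge images of $c$ (via the shift that sends the initial $V\rightarrow E$ tail maps to $E\rightarrow V$ head maps, then extends by a further $V\rightarrow E$ map). Non-edge-monicity of $c$ means the edge $e$ is hit by two components, so in $c^*$ the vertex $e$ of $G^*$ would be a tail (or head) for two components simultaneously, which violates the requirement that $c^*$ arise from a \emph{disjoint} union of $\overrightarrow{P}_{1}$'s with each vertex of $G^*$ serving as exactly one tail and one head. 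Hence $c^*$ fails to be a contributor and $c$ has no contributor-dual.

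I expect the main obstacle to be formalizing the correspondence between ``an edge reused by two $\overrightarrow{P}_{1}$ images in $c$'' and ``a vertex of $G^*$ that must serve as tail/head for two components of $c^*$,'' since the contributor-dual is defined only descriptively (via reversing cyclic incidence sequences of local components) rather than by an explicit formula. The care required is in pinning down exactly which incidence of the reused edge $e$ each dual component would claim, and verifying that no relabeling or reversal repairs the collision. Once the bookkeeping of incidences at the shared edge is set up carefully, the contradiction with the disjoint-union-of-paths structure of a contributor should follow directly, and invoking Lemma \ref{l:InCircle} handles any adjacency not already in a circle.
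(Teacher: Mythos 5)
Your proposal is correct and matches the paper's own (one-sentence) proof: arguing the contrapositive, a repeated edge in $c$ forces the corresponding vertex of $G^*$ to carry two tails, so $c^*$ cannot be a contributor. The detour through Lemma \ref{l:InCircle} is harmless but unnecessary --- the tail-collision argument alone suffices, which is all the paper uses.
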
 
\begin{proof}
    If not, then a repeated edge would produce two tails at the same vertex in $c^*$, preventing an associated permutation in $G^{*}$. \qed
\end{proof}

Let $\mathfrak{\hat{M}}_G^{\circ}$ be the set of edge-monic reduced contributors of $G$ where every adjacency is in a circle --- that is, no component of a contributor is a path unless it is a single vertex. 

\begin{lemma}\label{l:DiagHasDual}
    Every contributor of $\mathfrak{\hat{M}}_G^{\circ}$ has a contributor dual.
\end{lemma}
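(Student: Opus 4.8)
The plan is to show that the two necessary conditions isolated in Lemmas \ref{l:InCircle} and \ref{l:CDualsAreMonic} are already sufficient, by constructing $c^{*}$ explicitly, one component at a time, and then checking that the assembled object is a genuine contributor of $G^{*}$. Fix $c \in \mathfrak{\hat{M}}_G^{\circ}$. By definition every component of $c$ is either a complete circle or a single (reduced) vertex, and $c$ is edge-monic. First I would record the incidence data of a typical circle $C = (v_{0}, i_{1}, e_{1}, i_{2}, v_{1}, i_{3}, \ldots, e_{k}, i_{2k}, v_{0})$, observing that $C$ visits $k$ distinct vertices, $k$ distinct edges, and $2k$ distinct incidences, where the distinctness of the edges is exactly edge-monicity and the $\overrightarrow{P}_{1}$ attached to $v_{l}$ groups the incidence pair $(i_{2l+1}, i_{2l+2})$ by the common edge $e_{l+1}$.

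Next I would dualize each circle. Passing to $G^{*}$ interchanges the ports and attachments $\varsigma \leftrightarrow \omega$, so the old edges $e_{1}, \ldots, e_{k}$ become the vertices visited and the old vertices $v_{0}, \ldots, v_{k-1}$ become the edges traversed. The key move is the shift built into contributor duality: the incidences are re-grouped by the common \emph{vertex} they meet, so that the $\overrightarrow{P}_{1}$ of $c^{*}$ rooted at $e_{m}$ reads $e_{m} \to v_{m} \to e_{m+1}$ using the pair $(i_{2m}, i_{2m+1})$, and reversing the cyclic order orients this coherently as a directed adjacency of $G^{*}$. I would then verify that the result is a single directed circle $C^{*}$ in $G^{*}$: it is incidence-monic because it reuses the same $2k$ distinct incidences of $C$, it is closed because $C$ is closed, and it covers exactly the $k$ old edges as $G^{*}$-vertices and the $k$ old vertices as $G^{*}$-edges, each once.

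I would then handle the edges of $G$ that no circle of $c$ meets. Every vertex of $G^{*}$ (that is, every edge of $G$) must receive a $\overrightarrow{P}_{1}$ in a contributor of $G^{*}$, so each such unused edge $f$ is assigned a fixed point (a backstep), whose supporting incidence is guaranteed by the uniform zero-loading $G^{0}$; this mirrors exactly how the reduced single-vertex components are treated in $c$. Assembling the dualized circles with these fixed points yields an incidence-preserving map of one $\overrightarrow{P}_{1}$ per vertex of $G^{*}$ whose head set is all of $E$, hence a contributor $c^{*}$ of $G^{*}$. Here the two hypotheses do the decisive work: edge-monicity (Lemma \ref{l:CDualsAreMonic}) guarantees the dual circles are pairwise disjoint on the old edges, so no vertex of $G^{*}$ receives two images; and the requirement that every adjacency of $c$ lie in a circle (Lemma \ref{l:InCircle}) guarantees closure, since a genuine path component would dualize to a co-adjacency with no matching partner --- precisely the missing co-adjacency obstruction flagged before Lemma \ref{l:InCircle}.

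I expect the main obstacle to be the port-and-attachment bookkeeping under the simultaneous shift and reversal --- that is, confirming that interchanging $\varsigma$ and $\omega$ and re-grouping the incidences by old vertices really does send each directed adjacency of $C$ to a directed adjacency of $C^{*}$, so that $c^{*}$ is incidence-preserving as a \emph{directed} object and not merely as an underlying set. Carefully checking one adjacency $e_{m} \to v_{m} \to e_{m+1}$, tracking the signs $\sigma(i_{2m}), \sigma(i_{2m+1})$ and the arrow directions, and then propagating the orientation consistently around $C^{*}$ is the crux; once a single circle is verified, the pairwise disjointness supplied by edge-monicity makes the global assembly routine, and the unused-edge fixed points are covered by the same reduction and zero-loading conventions already in force for $c$.
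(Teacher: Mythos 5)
Your dualization of the circle components is careful and correct, and it is a fleshed-out version of the idea behind the paper's (one-line) proof. But there is a genuine gap in your component analysis: you assert at the outset that every component of $c \in \mathfrak{\hat{M}}_G^{\circ}$ is ``either a complete circle or a single (reduced) vertex.'' This omits backstep components, which the definition of $\mathfrak{\hat{M}}_G^{\circ}$ permits and which the paper's own proof makes central (``every component of $c$ is a circle or backstep; hence, self-dual''). Indeed the paper's headline examples --- the eight spanning-tree contributors of Figures \ref{fig:v2v2l} and \ref{fig:duals1} --- consist \emph{entirely} of backsteps, so your construction must get them right to be of any use. Because you dropped backsteps from the case analysis, your catch-all step mishandles them: you assign every edge not met by a circle a fixed point ``whose supporting incidence is guaranteed by the uniform zero-loading $G^0$.'' A backstep $v, i, e, i, v$ of $c$ must instead dualize to the backstep $e, i, v, i, e$ over the \emph{same} incidence $i$ --- this is forced by the definition of contributor-dual, which preserves (and reverses) the incidence sequence of each local component --- and since $i$ is a genuine incidence of $G$, hence of $G^*$, no zero-loading is involved. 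Supporting these fixed points on zero-loaded incidences produces an object that is not the contributor-dual, and worse, a zero contributor, which would break the sign and monomial correspondences (Lemmas \ref{l:samesign} and \ref{l:samemonomial}) that this lemma exists to feed.

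A second, related slip in the same step: you conflate edges carrying backsteps of $c$ with the genuinely unused edges $\overline{E}(c)$. The latter receive no $\overrightarrow{P}_{1}$ in $c^*$ at all --- they are the \emph{reduced} vertices of the dual reduced contributor, which is exactly how they generate the $x_{ee}$ factors of the extended monomial (Figure \ref{fig:duals1}) --- so your closing claim that the assembled map has ``head set all of $E$'' is incorrect for a reduced contributor; the heads cover only the non-reduced vertices of $G^*$. The repair is short and is precisely the paper's argument: since every adjacency of $c$ lies in a circle, every component of $c$ is a circle or a backstep; each such component, reread in $G^*$ with the same reversed incidence sequence, is again a circle or backstep (self-duality); edge-monicity makes these images pairwise disjoint on the vertices of $G^*$; and the edges of $\overline{E}(c)$ are simply left reduced. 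Your explicit circle verification then serves as the detailed core of that argument, with the backstep case handled by the observation that a backstep is self-dual over its own incidence.
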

\begin{proof}    
    Let $c$ be an edge-monic reduced contributor of $G$ with every adjacency in a circle. Thus, every component of $c$ is a circle or backstep; hence, self-dual. Thus, there is a contributor dual $c^{*}$ in $G^{*}$. \qed
\end{proof}

We now demonstrate that contributor duals preserve contributor sign.

\begin{lemma}
Given an oriented hypergraph $G$, its dual $G^{*}$, and reduced contributors $c$ and $c^{*}$ respectively, we have $\csgn(c)=\csgn(c^{*})$.
\label{l:samesign}
\end{lemma}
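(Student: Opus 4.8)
The plan is to show that the contributor sign $\csgn(c)=(-1)^{ec(\check{c})+nc(c)+bs(c)}$ is unchanged under the duality $c \mapsto c^*$ by arguing that each of the three exponent contributions is individually preserved. By Lemma \ref{l:CDualsAreMonic} and Lemma \ref{l:InCircle}, any $c$ admitting a dual is edge-monic with every adjacency in a circle, so by Lemma \ref{l:DiagHasDual} it suffices to understand $c \in \mathfrak{\hat{M}}_G^{\circ}$, where every component is either a backstep or a genuine circle. The key structural fact I would lean on is that the duality reverses each local cyclic incidence sequence but maps circles to circles and backsteps to backsteps; thus I would argue $bs(c)=bs(c^*)$ directly, since a backstep $v,i,e,i,v$ dualizes to the backstep $e,i,v,i,e$ using the same repeated incidence.

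First I would set up the exact correspondence between the cycle structure of $c$ and that of $c^*$. Since $c^*$ is obtained by reversing (and re-indexing vertex-steps as edge-steps) each local component's incidence sequence, a circle of length $m$ in $c$ becomes a circle of the same length $m$ in $c^*$: the number of incidences traversed is unchanged, and a closed walk alternating $V,E,V,E,\dots$ of $2m$ incidences dualizes to a closed walk of the same $2m$ incidences read as $E,V,E,V,\dots$. Consequently the \emph{multiset of cycle lengths} of $c$ equals that of $c^*$, which immediately gives $ec(\check{c})=ec(\check{c^*})$ for the even-cycle count of the underlying permutations, provided the reduction bookkeeping matches. Here I would be careful: $ec(\check{c})$ counts even cycles in the \emph{un-reduced} contributor, so I must confirm that the un-reduction (re-inserting the removed $\mathbf{u}\to\mathbf{w}$ pairwise maps) is compatible with dualization, i.e. that dualizing and then un-reducing gives the same cycle-length multiset as un-reducing and then dualizing.

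Next I would handle $nc(c)$, the number of negative components, which is the only genuinely sign-analytic step. The adjacency sign of a path is $\asgn(P)=(-1)^{\lfloor n/2\rfloor}\prod_{k=1}^{n}\sigma(i_k)$, and the sign of a component (circle) is the product of its signed adjacencies, equivalently a fixed power of $-1$ times the product of the incidence signs along the circle. Since the dual reverses the incidence sequence but preserves the \emph{set} of incidences and their $\sigma$-values (incidence signing is unchanged under $*$, as $G^*=(E,V,I,\omega,\varsigma,\sigma)$ keeps $\sigma$), the product $\prod \sigma(i_k)$ over a circle is invariant, and the length-dependent $(-1)^{\lfloor n/2\rfloor}$ factor is invariant because the cycle length is preserved. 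Hence each component of $c$ and its dual image have the same sign, giving $nc(c)=nc(c^*)$.

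The main obstacle I expect is \emph{not} the product-of-signs invariance, which is essentially formal once the incidence-set preservation is stated, but rather the bookkeeping around $ec(\check{c})$ versus $ec(\check{c^*})$ under reduction. The subtlety is that $\check{c}$ refers to the contributor \emph{before} the $\mathbf{u}\to\mathbf{w}$ maps were stripped, and I must verify that contributor duality intertwines correctly with this un-reduction so that the even-cycle counts of the two un-reduced permutations agree. I would address this by observing that reduction only removes pairwise maps lying inside circles (by Lemma \ref{l:InCircle}), so the un-reduced permutation's cycle type is determined by the same closed incidence walks that dualization preserves length-wise; I would then assemble the three equalities $ec(\check{c})=ec(\check{c^*})$, $nc(c)=nc(c^*)$, and $bs(c)=bs(c^*)$ to conclude $\csgn(c)=\csgn(c^*)$.
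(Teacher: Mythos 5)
Your proposal is correct and follows essentially the same route as the paper's proof: a component-wise bijection between $c$ and $c^*$ under which backsteps keep their incidence sequence (preserving $bs$), dual circles keep their length (preserving $ec(\check{c})$), and dual circles keep their incidence sequence and hence their sign (preserving $nc$). Your treatment is more detailed --- in particular the explicit $\asgn$ computation and the check that un-reduction commutes with dualization, which the paper handles implicitly via Lemma \ref{l:InCircle} --- but the underlying argument is the same.
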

\begin{proof}
By definition there is a bijection between the components of $c$ and $c^{*}$ where the incidence sequence for each component-pairing forms the same circle. Backsteps dualize to the same incidence sequence, thus, having no affect on $bs(c)$. For the circles left intact in $\check{c}$ and $\check{c}^{*}$ each pair of dual circles must have the same length. Thus, having no effect on $ec(\check{c})$. Moreover, any circle in both $\check{c}$ and $\check{c}^{*}$ have the same incidence sequence, so they have the same circle sign, having no effect on $nc(c)$. \qed
\end{proof}

Leaving circles untouched (with the exception of loops) immediately reduces the duality problem to only backsteps and loops, which occur along the main diagonal, giving the traditional characteristic polynomial. The elements of $\mathfrak{\hat{M}}_G^{\circ}$ where only backsteps and loops are reduced to produce monomials are referred to as \emph{diagonally reduced} and is denoted $\mathfrak{\hat{D}}_G$.

As we have seen in Theorem \ref{t:Main2}, sets of reduced contributors provide a way to calculate the coefficients of each monomial. In order to track each monomial through duality let the \emph{monomial of a $[\mathbf{u},\mathbf{w}]$-reduced contributor $c$} be
\begin{align*}
    m_c = \csgn(c)\dprod\limits_{i}x_{u_{i},w_{i}}.
\end{align*}

In Figure \ref{fig:duals1} we see each of the $8$ non-cancellative $[v_2,v_2]$-reduced contributors from Figure \ref{fig:v2v2l} that produce spanning trees along with their contributor-duals. Each contributor also depicts their associated monomial via the assumed missing backsteps for diagonally reduced contributors.

\begin{figure}[H]
    \centering    \includegraphics{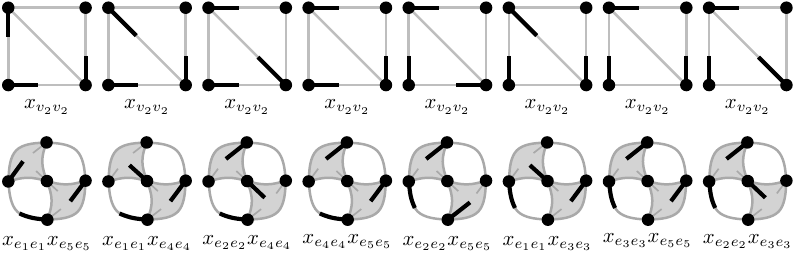}
    \caption{Contributor duals and their associated monomials.}
    \label{fig:duals1}
\end{figure} 

Observe that in Figure \ref{fig:duals1} the unused edges in the contributors of $G$ on the top row become the monomials in the dual. The same applies when moving from $G^*$ to $G$.

\subsection{A Classical Result}

We now focus on the non-$0$-isospectrality of the Laplacian and its dual (see Equation \ref{eq:1}) and providing further combinatorial insight as to why this simple relationship holds. Since the focus is on the main diagonal relationship we will use $\mathbf{x}$ to indicate a vector of monomials corresponding to the main diagonal of $\mathbf{X}$, subscripts of $V$ or $E$ will indicate which set is regarded as vertices so we can easily track the two characteristic polynomials.

We apply the main theorem, Theorem \ref{t:Main2}, over only diagonally reduced contributors to provide an expanded version of the traditional characteristic polynomials from Equation \ref{eq:1}. Here, the location of each $x$ is tracked across each monomial. Thus, for $G_1$ we have 

\begin{equation}
\begin{aligned} 
\chi (\mathbf{L}_{G_{1}},\mathbf{x}_{V}) &= \det(I\mathbf{x}_{V}-\mathbf{L}_{G_{1}})= \mathbf{- 8x_{v_{2}v_{2}}} - 8x_{v_{3}v_{3}} - 8x_{v_{4}v_{4}} - 8x_{v_{1}v_{1}} \\ &+ 5x_{v_{1}v_{1}}x_{v_{2}v_{2}} + 4x_{v_{1}v_{1}}x_{v_{3}v_{3}} + 5x_{v_{1}v_{1}}x_{v_{4}v_{4}} + 5x_{v_{2}v_{2}}x_{v_{3}v_{3}} + 8x_{v_{2}v_{2}}x_{v_{4}v_{4}} \\ &+ 5x_{v_{3}v_{3}}x_{v_{4}v_{4}} - 2x_{v_{1}v_{1}}x_{v_{2}v_{2}}x_{v_{3}v_{3}} - 3x_{v_{1}v_{1}}x_{v_{2}v_{2}}x_{v_{4}v_{4}} - 2x_{v_{1}v_{1}}x_{v_{3}v_{3}}x_{v_{4}v_{4}} \\& - 3x_{v_{2}v_{2}}x_{v_{3}v_{3}}x_{v_{4}v_{4}} + x_{v_{1}v_{1}}x_{v_{2}v_{2}}x_{v_{3}v_{3}}x_{v_{4}v_{4}}. \label{eq:3}
\end{aligned}
\end{equation}
Observe that the bolded coefficient of $- 8x_{v_{2}v_{2}}$ is determined from the $\csgn$ of the contributors from Figure \ref{fig:v2v2l}. We will prove that these $8$ contributors scatter throughout the dual according to their dual as shown in Figure \ref{fig:duals1}.

Since every diagonally reduced contributor has a contributor dual by Lemma \ref{l:DiagHasDual}, define the \emph{extended monomial} of a diagonally reduced contributor $c$ as the product of the monomials $m_c m_{c^*}$. Clearly, $c$ and $c^*$ have the same extended monomial. The extended monomial of the left-most contributor dual pair in Figure \ref{fig:duals1} is $x_{v_2 v_2}x_{e_1 e_1}x_{e_5 e_5}$.

\begin{lemma}\label{l:samemonomial}
    If $c$ and $c^{*}$ are a dual pair of diagonally reduced contributors, then they have the same extended monomial.
\end{lemma}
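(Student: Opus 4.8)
The plan is to collapse the claim to a single structural fact---that contributor-dualization is an involution, $(c^{*})^{*}=c$---after which the equality of extended monomials is immediate from commutativity of monomial multiplication. First I would recall that, by definition, the extended monomial of $c$ is the product $m_{c}m_{c^{*}}$. To even speak of ``the extended monomial of $c^{*}$,'' I would first check that $c^{*}$ is itself a diagonally reduced contributor in $G^{*}$: backsteps dualize to backsteps with the same incidence sequence, circles dualize to circles of equal length (as used in Lemma \ref{l:samesign}), and edge-monicity together with the ``every adjacency in a circle'' property are preserved by Lemmas \ref{l:CDualsAreMonic} and \ref{l:DiagHasDual}. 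Hence $c^{*}\in\mathfrak{\hat{D}}_{G^{*}}$ and its extended monomial is defined, equal to $m_{c^{*}}m_{(c^{*})^{*}}$.

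Next I would establish the involution $(c^{*})^{*}=c$. The underlying hypergraph satisfies $G^{**}=G$, since incidence duality merely swaps the port and attachment functions $\varsigma\leftrightarrow\omega$, and applying this twice restores the original sextuple. The contributor-dual is defined purely by preserving and reversing the cyclic incidence sequence of each local component, so dualizing a second time reverses the reversal and returns every component to its original incidence sequence and orientation in $G$. I would also note that the reduction data---which diagonal backsteps and loops were removed to form the monomial---is carried along faithfully, so the recovered object agrees with $c$ not just as an unreduced contributor but as a diagonally reduced contributor.

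With the involution in hand the conclusion is one line: $m_{(c^{*})^{*}}=m_{c}$, so the extended monomial of $c^{*}$ is $m_{c^{*}}m_{(c^{*})^{*}}=m_{c^{*}}m_{c}=m_{c}m_{c^{*}}$, which is exactly the extended monomial of $c$. (If one wishes to make the sign bookkeeping explicit, Lemma \ref{l:samesign} gives $\csgn(c)=\csgn(c^{*})$, so the common sign enters each factor in the same way; but for the equality of extended monomials this is not even needed, since $(c^{*})^{*}=c$ forces the two products to be literal rearrangements of one another.)

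The only genuine point of care---and the step I expect to be the main obstacle---is verifying $(c^{*})^{*}=c$ at the level of \emph{reduced} contributors rather than merely at the level of the underlying circles. That is, I must confirm that the act of recording which diagonal backsteps/loops are removed commutes with dualization, so that the doubly-dualized object recovers the same monomial-defining reduction as $c$. I would settle this by exhibiting the component-wise bijection supplied by the contributor-dual definition and checking that it sends reduced diagonal components to reduced diagonal components in both directions.
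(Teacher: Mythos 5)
Your proof is correct and matches the paper's reasoning: the paper states this lemma as immediate from the definition of the extended monomial $m_c m_{c^*}$ (``Clearly, $c$ and $c^*$ have the same extended monomial''), which tacitly relies on exactly the involution $(c^*)^* = c$ that you make explicit. Your additional care in verifying that $c^* \in \mathfrak{\hat{D}}_{G^*}$ and that the diagonal reduction data commutes with dualization is a sound elaboration of the same argument, not a different route.
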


The \emph{extended characteristic polynomial} of $G$ is the sum of the extended monomials. Let $\chi^{e} (\mathbf{L}_{G},\mathbf{x}_{V \cup E})$ be the extended polynomial of the Laplacian and $\chi^{e} (\mathbf{L}_{G}^{*},\mathbf{x}_{E \cup V})$ be the extended polynomial of the dual Laplacian. From Lemma \ref{l:samemonomial} we immediately have the following.

\begin{proposition}
\label{p:equalextpoly}
For any oriented hypergraph $G$, $\chi^{e} (\mathbf{L}_{G},\mathbf{x}_{V \cup E}) = \chi^{e} (\mathbf{L}_{G}^{*},\mathbf{x}_{E \cup V})$.
\end{proposition}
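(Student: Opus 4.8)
The plan is to realize contributor duality as an involutive bijection between the diagonally reduced contributors of $G$ and those of $G^{*}$, and then to show that this bijection identifies the two extended characteristic polynomials term by term.

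First I would argue that the map $c \mapsto c^{*}$ is a bijection from the diagonally reduced contributors of $G$ onto the diagonally reduced contributors of $G^{*}$. Lemma~\ref{l:DiagHasDual} guarantees that every diagonally reduced contributor of $G$ has a contributor-dual in $G^{*}$, and since dualizing preserves the component structure---backsteps dualize to backsteps, loops to loops, and intact circles to intact circles of the same length---the image $c^{*}$ is again diagonally reduced. Because incidence duality is an involution we have $(c^{*})^{*}=c$, so applying the same reasoning to $G^{*}$ (whose dual is $G$) shows the map is surjective and equal to its own inverse; hence it is a bijection.

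Next I would expand each extended characteristic polynomial as the sum of its extended monomials indexed over the appropriate family: $\chi^{e}(\mathbf{L}_{G},\mathbf{x}_{V\cup E})=\sum_{c} m_{c} m_{c^{*}}$, summed over diagonally reduced contributors $c$ of $G$, and $\chi^{e}(\mathbf{L}_{G}^{*},\mathbf{x}_{E\cup V})=\sum_{d} m_{d} m_{d^{*}}$, summed over diagonally reduced contributors $d$ of $G^{*}$. Reindexing the second sum by $d=c^{*}$ via the bijection above, the summand $m_{d} m_{d^{*}}=m_{c^{*}} m_{c}$ is literally the product $m_{c} m_{c^{*}}$, which Lemma~\ref{l:samemonomial} identifies as the common extended monomial of the dual pair $(c,c^{*})$. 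Thus the two sums run over matching terms and the polynomials coincide.

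I expect the only genuine care to lie in the first step: confirming that dualization restricts to a bijection on diagonally reduced contributors, i.e.\ that the dual of a diagonally reduced contributor is itself diagonally reduced and that every diagonally reduced contributor of $G^{*}$ is attained. This is exactly where the hypotheses that every adjacency lies in a circle (Lemma~\ref{l:InCircle}) and that such components are self-dual (as exploited in Lemma~\ref{l:DiagHasDual}) are needed. Once the bijection is in hand, the identification of monomials is immediate from Lemmas~\ref{l:samesign} and \ref{l:samemonomial}, so no further computation is required.
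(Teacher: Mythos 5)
Your proposal is correct and follows essentially the same route as the paper, which derives the proposition ``immediately'' from Lemma~\ref{l:samemonomial} via exactly the dual-pairing of diagonally reduced contributors you describe. The only difference is that you explicitly verify what the paper leaves implicit --- that $c \mapsto c^{*}$ is an involutive bijection between $\mathfrak{\hat{D}}_G$ and $\mathfrak{\hat{D}}_{G^{*}}$ (via Lemma~\ref{l:DiagHasDual} and the involutivity of incidence duality) --- which is a worthwhile bit of added rigor but not a different argument.
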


However, we need to demonstrate that the extended characteristic polynomial is obtainable using a single set of diagonally reduced contributors either from $G$ or $G^*$, but not both. Define the \emph{vertex complement of contributor $c$} as the set of vertices with no head or tail present in $c$, i.e. those which were reduced, denoted by $\overline{V}(c)$. Similarly, $\overline{E}(c)$ denotes the unused edges. 

\begin{theorem}
\label{t:extpolysum}
    Let $\mathfrak{\hat{D}}_G$ be the set of diagonally reduced contributors of $G$. Then the extended characteristic polynomial is given by
    \begin{align*}
        \chi^{e} (\mathbf{L}_{G},\mathbf{x}_{V \cup E}) &= \dsum\limits_{c \in \mathfrak{\hat{D}}_G} sgn(c) \dprod\limits_{v \in \overline{V}(c)} x_{vv}\dprod\limits_{e \in \overline{E}(c)} x_{ee} \\
        &= \dsum\limits_{c^* \in \mathfrak{\hat{D}}_{G^{*}}} sgn(c^*) \dprod\limits_{e \in \overline{E}(c)} x_{ee} \dprod\limits_{v \in \overline{V}(c)} x_{vv} = \chi^{e} (\mathbf{L}^{*}_{G},\mathbf{x}_{E \cup V}).
    \end{align*}
\end{theorem}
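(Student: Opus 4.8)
The plan is to start from the defining expression $\chi^{e}(\mathbf{L}_{G},\mathbf{x}_{V\cup E}) = \sum_{c\in\mathfrak{\hat{D}}_G} m_c m_{c^*}$ and accomplish two separate things: first, rewrite each extended monomial $m_c m_{c^*}$ entirely in terms of data that can be read off from $c$ alone, which produces the left-hand equality; and second, exploit the fact that contributor-duality is an involution to re-index the whole sum over $\mathfrak{\hat{D}}_{G^*}$, which produces the right-hand equality. The two engines driving this are the duality bijection $c\mapsto c^*$ together with Lemmas \ref{l:samesign} and \ref{l:samemonomial}.

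For the left-hand equality I would expand the two factors separately. By the definition of the monomial of a diagonally reduced contributor, $m_c = \csgn(c)\prod_{v\in\overline{V}(c)}x_{vv}$, where $\overline{V}(c)$ is exactly the set of reduced vertex fixed-points of $c$. The factor $m_{c^*}$ supplies the $x_{ee}$ variables, and the essential claim is that the reduced fixed-points of $c^*$ are precisely the unused edges $\overline{E}(c)$ of $c$, so that $m_{c^*}=\csgn(c^*)\prod_{e\in\overline{E}(c)}x_{ee}$. Here I would use that contributor-duality preserves the incidence set of every component (as in the proof of Lemma \ref{l:samesign}): since $c$ lies in $\mathfrak{\hat{M}}_G^{\circ}$, every adjacency sits in a circle, so the edges of $G$ touched by $c$ are exactly those appearing in its circles; dualizing carries these circles to the circles of $c^*$ and hence identifies the used edges of $c$ with the circle-vertices of $c^*$. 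Passing to complements, the edges unused by $c$ are precisely the vertices of $G^*$ left over as reduced fixed-points of $c^*$, which is the identification with $\overline{E}(c)$. Multiplying and invoking Lemma \ref{l:samesign} to combine $\csgn(c)\csgn(c^*)$ into the single quantity $\sgn(c)$ recorded in the statement gives $m_c m_{c^*}=\sgn(c)\prod_{v\in\overline{V}(c)}x_{vv}\prod_{e\in\overline{E}(c)}x_{ee}$, and summing over $\mathfrak{\hat{D}}_G$ is the first equality.

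For the right-hand equality I would first record that $c\mapsto c^*$ is an involutive bijection from $\mathfrak{\hat{D}}_G$ onto $\mathfrak{\hat{D}}_{G^*}$: every diagonally reduced contributor has a dual by Lemma \ref{l:DiagHasDual}, that dual is edge-monic with every adjacency in a circle by Lemmas \ref{l:CDualsAreMonic} and \ref{l:InCircle} and so again lies in $\mathfrak{\hat{D}}_{G^*}$ (its only reductions being the dualized backsteps and loops), and $(c^*)^*=c$. By Lemma \ref{l:samemonomial} the extended monomial of $c$ equals that of $c^*$, and by Lemma \ref{l:samesign} $\sgn(c)=\sgn(c^*)$; hence the term indexed by $c$ in the first sum is literally the term indexed by $c^*$ in the $\mathfrak{\hat{D}}_{G^*}$-sum, with $\overline{V}(c)$ and $\overline{E}(c)$ playing the roles of the reduced and unused sets of $c^*$. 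Re-indexing along the bijection therefore converts the sum over $\mathfrak{\hat{D}}_G$ into the sum over $\mathfrak{\hat{D}}_{G^*}$, and running the left-hand-equality argument verbatim in $G^*$ (with the roles of $V$ and $E$ exchanged) identifies that sum with $\chi^{e}(\mathbf{L}_G^*,\mathbf{x}_{E\cup V})$.

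The main obstacle is the bookkeeping inside the left-hand equality, namely proving cleanly that the reduced diagonal of $c^*$ is exactly $\overline{E}(c)$ in spite of duality swapping the roles of $V$ and $E$. One must check carefully that $c^*$ is genuinely diagonally reduced, i.e. that the only reductions performed on $c^*$ are the dualized fixed-points and that no circle-adjacency of $c^*$ is inadvertently removed, and that the incidence-set matching correctly pairs the used edges of $c$ with the circle-vertices of $c^*$. This is precisely the phenomenon illustrated in Figure \ref{fig:duals1}, where the edges left unused by a contributor of $G$ reappear as the diagonal monomials of its dual; turning that picture into the identity that the reduced diagonal of $c^*$ equals $\overline{E}(c)$ is exactly what upgrades the two-sided Proposition \ref{p:equalextpoly} into the single-sided formula asserted by the theorem.
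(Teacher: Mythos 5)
Your proof is correct and follows essentially the same route as the paper, whose own proof simply cites Lemmas \ref{l:samesign} and \ref{l:samemonomial} for the middle equality and Theorem \ref{t:Main2} together with Lemma \ref{l:DiagHasDual} for the outer equalities; your extra verifications (that duality is an involutive bijection between $\mathfrak{\hat{D}}_G$ and $\mathfrak{\hat{D}}_{G^*}$, and that the reduced diagonal of $c^*$ is exactly $\overline{E}(c)$) are details the paper leaves implicit rather than a different argument. One small caution: since the extended monomial is defined as the product $m_c m_{c^*}$, the literal sign would be $\csgn(c)\csgn(c^*)=\csgn(c)^2=+1$, so your phrase about ``combining'' the two signs into $\sgn(c)$ should be read, as the theorem statement intends, as recording the common sign $\csgn(c)=\csgn(c^*)$ exactly once.
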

\begin{proof}
By Lemmas \ref{l:samesign} and \ref{l:samemonomial} we know the middle sums are equal. By Theorem \ref{t:Main2} and Lemma \ref{l:DiagHasDual} we have the first and last equality. \qed
\end{proof}

We now characterize the deviation between monomial degrees of dual contributors as well as their deviation from their maximum degree.

\begin{lemma}
\label{l:diffisconstant}
Let $c$ and $c^*$ be dual contributor pairs from oriented hypergraph $G$ with monomials $m_c$ and $m_{c^*}$, then $\left| deg(m_c) - deg(m_{c^*}) \right| = \left| \left|E\right|-\left|V\right| \right|$.
\end{lemma}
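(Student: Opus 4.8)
The plan is to convert the degree difference into a count of reduced vertices against unused edges, and then to exploit a vertex--edge balance internal to each component of a diagonally reduced contributor.

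First I would identify the two degrees explicitly. Because $c$ is diagonally reduced, every variable appearing in $m_c$ is a diagonal vertex-variable $x_{vv}$, one for each reduced vertex, so $\deg(m_c) = \lvert \overline{V}(c) \rvert$; dually $m_{c^*}$ consists only of diagonal edge-variables $x_{ee}$, one for each reduced edge, so $\deg(m_{c^*}) = \lvert \overline{E}(c) \rvert$. These are precisely the two factors of the extended monomial $m_c m_{c^*}$ appearing in Theorem \ref{t:extpolysum}, and since the vertex-variables and edge-variables are disjoint the split is unambiguous. Thus it suffices to prove $\left| \lvert \overline{V}(c) \rvert - \lvert \overline{E}(c) \rvert \right| = \left| \lvert E \rvert - \lvert V \rvert \right|$.

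The key observation is that each component of a diagonally reduced contributor meets exactly as many vertices as edges. A circle of length $\ell$ alternates vertices and edges and, being closed, passes through $\ell$ vertices and $\ell$ edges; a backstep or a loop meets a single vertex through a single edge. Since $c \in \mathfrak{\hat{M}}_G^{\circ}$ is edge-monic, no edge is reused between components, so summing the per-component counts shows that the number of vertices present in $c$ equals the number of edges used by $c$, i.e. $\lvert V \rvert - \lvert \overline{V}(c) \rvert = \lvert E \rvert - \lvert \overline{E}(c) \rvert$.

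Rearranging gives $\lvert \overline{V}(c) \rvert - \lvert \overline{E}(c) \rvert = \lvert V \rvert - \lvert E \rvert$, and combining with the degree identities of the first step yields $\deg(m_c) - \deg(m_{c^*}) = \lvert V \rvert - \lvert E \rvert$; taking absolute values completes the argument. The one point requiring care is the edge bookkeeping for fixed points: a present backstep or loop must be counted as consuming one edge, so that it contributes equally to the vertex and edge tallies, whereas a reduced fixed point contributes a diagonal vertex-variable to $m_c$ while leaving its edge among the unused edges $\overline{E}(c)$. Once this is checked directly against the definitions of $\overline{V}(c)$, $\overline{E}(c)$, and diagonal reduction, the balance identity is immediate and the lemma follows.
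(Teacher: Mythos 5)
Your proof is correct and takes essentially the same route as the paper: the paper simply observes that $c$ and $c^{*}$ contain the same number $n$ of remaining $\overrightarrow{P}_{1}$ maps, so $\deg(m_c)=|V|-n$ and $\deg(m_{c^*})=|E|-n$, and subtracts. Your per-component vertex--edge balance (circles of length $\ell$ meeting $\ell$ vertices and $\ell$ edges, backsteps and loops meeting one of each, with edge-monicity preventing any reuse) is exactly an explicit justification of why that common count $n$ equals both the number of present vertices and the number of used edges, i.e.\ why $|\overline{V}(c)| - |\overline{E}(c)| = |V|-|E|$, so you prove the same identity with the bookkeeping the paper leaves implicit.
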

\begin{proof}
Let $G$ have $|V|$ many vertices and $|E|$ many edges, and let $c$ and $c^{*}$ be a pair of dual contributors. Observe that $c$ and $c^{*}$ contain the same number of $\overrightarrow{P}_{1}$ maps, call it $n$. Thus, the monomial-degree of $c$ in $\chi (\mathbf{L}_{G},\mathbf{x}_{V})$ is $|V|-n$ and the monomial-degree of $c^{*}$ in $\chi (\mathbf{L}_{G}^{*},\mathbf{x}_{E})$ is $|E|-n$. Taking their difference yields the result. \qed
\end{proof}

\begin{lemma}
\label{l:samenumofxs}
Let $m_c$ and $m_{c^*}$ be monomials in $\chi (\mathbf{L}_{G},\mathbf{x}_{V})$ and $\chi (\mathbf{L}_{G}^{*},\mathbf{x}_{E})$ respectively. Then $|V|-deg(m_c)=|E|-deg(m_{c^*})$. 
\end{lemma}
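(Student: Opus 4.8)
The plan is to reuse the bookkeeping already set up in the proof of Lemma \ref{l:diffisconstant}, because the quantity $|V|-deg(m_c)$ is precisely the invariant $n$ appearing there. First I would fix $c$ and its contributor-dual $c^*$ and recall that, by the definition of the contributor-dual, the bijection between components of $c$ and $c^*$ forces the two contributors to contain the same number of $\overrightarrow{P}_{1}$ maps; call this common number $n$. This equal-map-count is the only structural input needed, and it is the same observation that drives Lemma \ref{l:diffisconstant}.

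Next I would convert $n$ into the two monomial degrees. A diagonally reduced contributor of $G$ starts from $\dcoprod_{v\in V}\overrightarrow{P}_{1}$, so it begins with $|V|$ many $\overrightarrow{P}_{1}$ maps; exactly $n$ of these survive in $c$ while the remaining $|V|-n$ are diagonally reduced into main-diagonal variables, whence $deg(m_c)=|V|-n$. Dualizing, $c^*$ is built over the $|E|$ edges of $G$ regarded as the vertices of $G^*$, so by the identical count $deg(m_{c^*})=|E|-n$.

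Finally, rearranging each of these two equations isolates the common value: $|V|-deg(m_c)=n$ and $|E|-deg(m_{c^*})=n$, so equating the right-hand sides gives $|V|-deg(m_c)=|E|-deg(m_{c^*})$, as claimed. I expect no genuine obstacle here, since the statement is essentially a repackaging of the internal counting of Lemma \ref{l:diffisconstant}; the one point to be careful about is that the notation $m_c,m_{c^*}$ is tacitly assuming $c$ and $c^*$ form a \emph{dual} pair, as this is exactly what guarantees the shared map-count $n$. Without the dual-pair hypothesis the two monomials would be unrelated and the equality would fail, so I would state that assumption explicitly at the start of the argument.
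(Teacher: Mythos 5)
Your proof is correct and takes essentially the same route as the paper's: both arguments identify $|V|-deg(m_c)$ and $|E|-deg(m_{c^*})$ with the common number $n$ of $\overrightarrow{P}_{1}$ maps shared by the dual pair $c$ and $c^*$, exactly the count already used in Lemma \ref{l:diffisconstant}. Your closing remark that the statement tacitly assumes $c$ and $c^*$ form a dual pair correctly makes explicit a hypothesis the paper leaves implicit in the notation $m_c$, $m_{c^*}$.
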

\begin{proof}
Observe that $|V|-deg(m)$ is the number of $\overrightarrow{P}_{1}$ maps into $G$ creating $c$ and $|E|-deg(m^{*})$ is the number of $\overrightarrow{P}_{1}$ maps into $G^{*}$ and since $c$ and $c^{*}$ are a contributor dual pair, the number of $\overrightarrow{P}_{1}$ maps into $G$ and $G^{*}$ are the same. \qed
\end{proof}

Observe that Lemma \ref{l:samenumofxs} implies that given monomials $m_c$ and $m_{c^*}$ in $\chi (\mathbf{L}_{G},\mathbf{x}_{V})$ and $\chi (\mathbf{L}_{G}^{*},\mathbf{x}_{E})$ respectively, that the number of missing $x_{v_{i}v_{i}}$ to obtain the maximum degree of 
$|V|$ in $m_c$ is equal to the number of missing $x_{e_{i}e_{i}}$ to obtain the maximum degree of 
$|E|$ in $m_{c^*}$.

\begin{theorem}
Given an oriented hypergraph $G$ and its dual, $G^{*}$, $x^{|E|}\chi (\mathbf{L}_{G},x) =x^{|V|}\chi (\mathbf{L}_{G}^{*},x)$.
\label{t:equal}
\end{theorem}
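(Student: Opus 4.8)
The plan is to write each traditional characteristic polynomial as a single-variable specialization of the contributor sum in Theorem \ref{t:Main2}, and then match the two resulting sums term-by-term through diagonal contributor duality. First I would set $\mathbf{X} = xI$ in the total-minor polynomial. Every off-diagonal variable is then killed, so only monomials $\prod_i x_{u_i w_i}$ with all $u_i = w_i$ survive; these come exactly from the diagonally reduced contributors $\mathfrak{\hat{D}}_G$, in which all reduced maps are backsteps or loops and all remaining adjacencies lie in circles. This yields
\[
\chi(\mathbf{L}_G,x) = \sum_{c \in \mathfrak{\hat{D}}_G} \csgn(c)\, x^{\deg(m_c)}, \qquad \chi(\mathbf{L}_{G^*},x) = \sum_{c^* \in \mathfrak{\hat{D}}_{G^*}} \csgn(c^*)\, x^{\deg(m_{c^*})},
\]
where $\deg(m_c) = |\overline{V}(c)|$ counts the reduced diagonal backsteps/loops and $\deg(m_{c^*}) = |\overline{E}(c)|$ is the dual count. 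This is precisely the expanded form already illustrated for $G_1$ in Equation \ref{eq:3}, now collapsed to one variable.

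Next I would invoke diagonal duality to pair the summands. By Lemma \ref{l:DiagHasDual} every $c \in \mathfrak{\hat{D}}_G$ has a contributor-dual $c^* \in \mathfrak{\hat{D}}_{G^*}$, and since dualization is involutive this gives a bijection $\mathfrak{\hat{D}}_G \leftrightarrow \mathfrak{\hat{D}}_{G^*}$. Two facts make each dual pair interchangeable. Lemma \ref{l:samesign} gives $\csgn(c) = \csgn(c^*)$, so the coefficients agree. Lemma \ref{l:samenumofxs} gives $|V| - \deg(m_c) = |E| - \deg(m_{c^*})$; writing $n = |V| - \deg(m_c)$ for the common number of $\overrightarrow{P}_1$ maps in the pair, the exponents become $\deg(m_c) = |V| - n$ and $\deg(m_{c^*}) = |E| - n$.

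The key computation is then to multiply each polynomial by the factor that equalizes these degrees. Multiplying the first sum by $x^{|E|}$ sends the $c$-term to $\csgn(c)\,x^{|E| + |V| - n}$, while multiplying the second by $x^{|V|}$ sends the $c^*$-term to $\csgn(c^*)\,x^{|V| + |E| - n}$. Under the bijection these agree in both sign and exponent, so summing over all dual pairs gives $x^{|E|}\chi(\mathbf{L}_G,x) = x^{|V|}\chi(\mathbf{L}_{G^*},x)$. Equivalently, the same cancellation of the constant degree gap $\bigl||E|-|V|\bigr|$ recorded in Lemma \ref{l:diffisconstant} can be read off directly from Theorem \ref{t:extpolysum} by specializing the extended polynomial's vertex and edge variables asymmetrically.

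The step I expect to be the main obstacle is the first one: justifying cleanly that the specialization $\mathbf{X} = xI$ selects precisely $\mathfrak{\hat{D}}_G$, i.e. that the only surviving monomials force every reduced map to be diagonal and every remaining adjacency to lie in a circle. Once that identification is secure, the rest is bookkeeping, since the sign match and the degree match are handed to us by Lemmas \ref{l:samesign} and \ref{l:samenumofxs}, and the prefactors $x^{|E|}$ and $x^{|V|}$ are exactly the shift needed to absorb the constant difference from Lemma \ref{l:diffisconstant}.
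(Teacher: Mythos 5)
Your proof is correct and takes essentially the same route as the paper: the paper packages the identical dual-pair bookkeeping as the extended characteristic polynomial (Theorem \ref{t:extpolysum}), pads each pair by $x^{|V|-\deg(m_c)}=x^{|E|-\deg(m_{c^*})}$, and then ``drops subscripts,'' whereas you inline the bijection $\mathfrak{\hat{D}}_G \leftrightarrow \mathfrak{\hat{D}}_{G^*}$ directly after the specialization $\mathbf{X}=xI$ --- but the ingredients (Lemmas \ref{l:DiagHasDual}, \ref{l:samesign}, \ref{l:samenumofxs}) and the exponent arithmetic $|E|+|V|-n$ are the same. The step you flagged as the main obstacle is in fact immediate: every un-reduced contributor is a disjoint union of circles and backsteps (its heads and tails each cover $V$, so it realizes a permutation), hence a reduction using only diagonal maps removes exactly fixed points (backsteps or loops) and leaves every surviving adjacency in an intact circle, which is precisely the paper's motivation for defining $\mathfrak{\hat{D}}_G$.
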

\begin{proof}
Using Theorem \ref{t:extpolysum} we have that $\chi^{e} (\mathbf{L}_{G},\mathbf{x}_{V \cup E})=\chi^{e} (\mathbf{L}_{G}^{*},\mathbf{x}_{E \cup V})$ and they can be determined by either set of diagonally reduced contributors. By Lemma \ref{l:samenumofxs} we also have $|V|-deg(m_c)=|E|-deg(m_{c^*})$ for each dual monomial pair. Thus, multiplying each monomial pair by an un-indexed indeterminant $x^{|V|-deg(m_c)} = x^{|E|-deg(m_{c^*})}$ preserves equality and introduces all missing $x$ variables to form a temporary extended-extended characteristic polynomial.

Now consider the extended-extended version of $\chi^{e} (\mathbf{L}_{G},\mathbf{x}_{V \cup E})$ and drop the subscripts on every $x_{e_{i}e_{i}}$ to only be $x$ giving $x^{|E|}\chi (\mathbf{L}_{G},\mathbf{x}_{V})$. Similarly, we transform the $\chi^{e} (\mathbf{L}_{G}^{*},\mathbf{x}_{E \cup V})$ side by dropping the subscript on every $x_{v_{i}v_{i}}$ to give $x$ producing $x^{|V|}\chi (\mathbf{L}_{G}^{*},\mathbf{x}_{E})$. 

Finally, drop all the remaining indices to produce the traditional characteristic polynomial relationship. \qed
\end{proof}

Figure \ref{fig:flowofcont} illustrates how the first dual contributor pair from Figure \ref{fig:duals1} is parsed according to Theorem \ref{t:equal}.

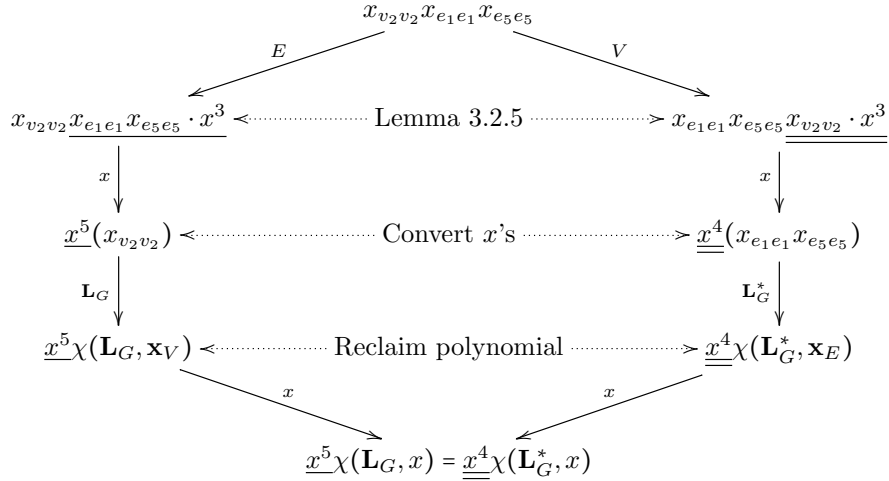
\begin{figure}[H]
    \centering
\begin{align*}
\xymatrix{
   &  {x_{v_{2}v_{2}}}{x_{e_{1}e_{1}}x_{e_{5}e_{5}}} \ar[dl]_{{E}}\ar[dr]^{{V}}  &  \\
    {x_{v_{2}v_{2}}}\underline{{x_{e_{1}e_{1}}x_{e_{5}e_{5}} \cdot x^3}} \ar[d]_{{x}} & \ar@{..>}[l] \text{Lemma \ref{l:samenumofxs}} \ar@{..>}[r] & {x_{e_{1}e_{1}}x_{e_{5}e_{5}}}\underline{\underline{{x_{v_{2}v_{2}}} \cdot x^3 }} \ar[d]_{{x}}  \\
 \underline{{x^{5}}}{(x_{v_{2}v_{2}})} \ar[d]_{{\mathbf{L}_{G}}}& \ar@{..>}[l] \text{Convert $x$'s} \ar@{..>}[r] & \underline{\underline{{x^4}}}{{(x_{e_{1}e_{1}}x_{e_{5}e_{5}})}} \ar[d]_{{\mathbf{L}_{G}^{*}}}\\
  \underline{{x^{5}}}{\chi (\mathbf{L}_{G},\mathbf{x}_{V})} \ar[dr]^{{x}}  & \ar@{..>}[l] \text{Reclaim polynomial} \ar@{..>}[r]  &  \underline{\underline{{x^{4}}}}{\chi (\mathbf{L}_{G}^{*},\mathbf{x}_{E})} \ar[dl]_{{x}} \\
    & \underline{x^{5}}\chi (\mathbf{L}_{G},x) = \underline{\underline{x^{4}}}\chi (\mathbf{L}_{G}^{*},x)  &  \\
}
\end{align*}
    \caption{Each dual contributor pair allows for the inclusion of the same number of missing indeterminants that that provide a constant factor our of each term.}
    \label{fig:flowofcont}
\end{figure}

\subsection{Putting it all together}

Recall that the characteristic polynomial in Equation \ref{eq:3} is a location-distinguished version of the traditional characteristic polynomial from Equation \ref{eq:1} for $G_1$. The traditional characteristic polynomial of $G_1^*$ appears in Equation \ref{eq:2}, while the location-distinguished version appears in Equation \ref{eq:4}. The coefficients of the $x_{e_{i}e_{i}}x_{e_{j}e_{j}}$ terms are all $-4$, a single but written as $-(3+\mathbf{{1}})$ to highlight where each dual contributors from Figure \ref{fig:duals1} occurs.

\begin{equation}
\begin{aligned} \label{eq:4}
\chi (\mathbf{L}_{G_{1}}^{*},\mathbf{x}_{E})&=\det(I\mathbf{x}_{E}-\mathbf{L}_{G_{1}}^{*})=- (3+\mathbf{{1}})x_{e_{1}e_{1}}x_{e_{4}e_{4}} - (3+\mathbf{{1}})x_{e_{2}e_{2}}x_{e_{3}e_{3}} \\ &- (3+\mathbf{{1}})x_{e_{1}e_{1}}x_{e_{5}e_{5}} - (3+\mathbf{{1}})x_{e_{2}e_{2}}x_{e_{4}e_{4}} - (3+\mathbf{{1}})x_{e_{2}e_{2}}x_{e_{5}e_{5}} \\ & - (3+\mathbf{{1}})x_{e_{3}e_{3}}x_{e_{5}e_{5}} - (3+\mathbf{{1}})x_{e_{4}e_{4}}x_{e_{5}e_{5}} - (3+\mathbf{{1}})x_{e_{1}e_{1}}x_{e_{3}e_{3}} \\ & + 3x_{e_{1}e_{1}}x_{e_{2}e_{2}}x_{e_{3}e_{3}} + 3x_{e_{1}e_{1}}x_{e_{2}e_{2}}x_{e_{4}e_{4}} + 3x_{e_{1}e_{1}}x_{e_{2}e_{2}}x_{e_{5}e_{5}} \\ &+ 3x_{e_{1}e_{1}}x_{e_{3}e_{3}}x_{e_{4}e_{4}} + 4x_{e_{1}e_{1}}x_{e_{3}e_{3}}x_{e_{5}e_{5}} + 3x_{e_{2}e_{2}}x_{e_{3}e_{3}}x_{e_{4}e_{4}} \\ &+ 3x_{e_{1}e_{1}}x_{e_{4}e_{4}}x_{e_{5}e_{5}} + 3x_{e_{2}e_{2}}x_{e_{3}e_{3}}x_{e_{5}e_{5}} + 4x_{e_{2}e_{2}}x_{e_{4}e_{4}}x_{e_{5}e_{5}} \\ &+ 3x_{e_{3}e_{3}}x_{e_{4}e_{4}}x_{e_{5}e_{5}} - 2x_{e_{1}e_{1}}x_{e_{2}e_{2}}x_{e_{3}e_{3}}x_{e_{4}e_{4}} - 2x_{e_{1}e_{1}}x_{e_{2}e_{2}}x_{e_{3}e_{3}}x_{e_{5}e_{5}} \\ &- 2x_{e_{1}e_{1}}x_{e_{2}e_{2}}x_{e_{4}e_{4}}x_{e_{5}e_{5}} - 2x_{e_{1}e_{1}}x_{e_{3}e_{3}}x_{e_{4}e_{4}}x_{e_{5}e_{5}} \\ &- 2x_{e_{2}e_{2}}x_{e_{3}e_{3}}x_{e_{4}e_{4}}x_{e_{5}e_{5}} + x_{e_{1}e_{1}}x_{e_{2}e_{2}}x_{e_{3}e_{3}}x_{e_{4}e_{4}}x_{e_{5}e_{5}}. 
\end{aligned} 
\end{equation}

The coefficient of the bold $\mathbf{-8 x_{v_2 v_2}}$ from $G_1$ in Equation \ref{eq:3} corresponds to the top contributors from Figure \ref{fig:duals1}. The corresponding dual contributors on the bottom of Figure \ref{fig:duals1} produce the bolded $\mathbf{1}$'s in Equation \ref{eq:4}. We see that the $-8$ coefficient is uniformly scattered across minors of the dual. Interestingly, this implies that the rooted first minors of a graph that produce the matrix-tree theorem (via extending the contributors) do not have spanning tree analogs in the dual. In fact, it is the uniqueness of the other end of a $2$-edge that enforces this. Thus, while we are able to narrow in on the optimal set of contributors to determine any integer minor, we cannot expect matrix-tree-type theorems and will need a uniquely hypergraphic interpretation to move forward.

\end{document}